\theoremstyle{plain}
\newtheorem{theo}{Theorem}[section]
\newtheorem*{theo*}{Theorem}
\newtheorem*{main*}{Main result}
\newtheorem{prop}[theo]{Proposition}
\newtheorem{lemm}[theo]{Lemma}
\newtheorem{coro}[theo]{Corollary}
\newtheorem{defi}[theo]{Definition}
\theoremstyle{definition}
\newtheorem{rema}[theo]{Remark}
\newtheorem*{rema*}{Remark}
\newtheorem*{nota*}{Notation}
\DeclareMathOperator{\diff}{d}
\def\aeta{\arrowvert_{y=\eta}}
\def\be{\begin{equation}}
\def\dsigma{\diff \! \sigma}
\def\dt{\diff \! t}
\def\dx{\diff \! x}
\def\dX{\diff \! X}
\def\dxdt{\dx \dt}
\def\dy{\diff \! y}
\def\dydt{\diff \! y \diff \! t}
\def\dydx{\diff \! y \diff \! x}
\def\dydxdt{\diff \! y \diff \! x \diff \! t}
\def\ee{\end{equation}}
\def\e{\eqref}
\def\intL{\int_0^L}
\def\intT{\int_0^T}
\def\intTL{\int_0^T\int_0^L}
\def\pair{e}
\def\defn{\mathrel{:=}}
\def\Hr{\mathcal{H}}
\def\la{\left\vert}
\def\lA{\left\Vert}
\def\le{\leq}
\def\les{\lesssim}
\def\mez{\frac{1}{2}}
\def\ra{\right\vert}
\def\rA{\right\Vert}
\def\tdm{\frac{3}{2}}
\def\uq{\frac{1}{4}}
\def\xN{\mathbb{N}}
\def\xR{\mathbb{R}}
\def\xZ{\mathbb{Z}}
\def\px{\partial_x}
\def\py{\partial_y}
\def\ST{\Big\arrowvert_0^T}
\title{Stabilization of gravity water waves}
\author[Thomas Alazard]{Thomas Alazard \\ CNRS \& \'Ecole normale sup\'erieure}
\def\notina[#1]#2{\begingroup\def\thefootnote{\fnsymbol{footnote}}\footnote[#1]{#2}\endgroup}
\begin{document}

\begin{abstract}
This paper is devoted to the stabilization of the incompressible Euler equation with free surface. 
We study the damping of two-dimensional gravity waves by an absorbing beach 
where the water-wave energy is dissipated by using the variations of the external pressure. 
\end{abstract}

\maketitle

\section{Introduction}

Many problems\notina[0]{This work is 
partly supported by the grant ``ANA\'E'' ANR-13-BS01-0010-03.} 
in water-wave theory require 
to study the behavior of waves propagating in an unbounded domain, 
like those encountered in the open sea. 
On the other hand, the numerical analysis of the water-wave equations requires to work 
in a bounded domain. This problem appears for the effective modeling of many partial differential equations 
and several methods have been developed to solve it. 
A classical approach consists in truncating the domain by introducing 
an artificial boundary. This is possible provided that one can find some 
special non-reflecting boundary conditions which 
make the artificial boundary (approximatively) invisible to outgoing waves. 
We refer to the extensive surveys by Israeli and Orszag~\cite{Israeli1981}, Tsynkov~\cite{Tsynkov} 
and also to the recent papers  by Abgrall, Carney, Jennings, Karni, Pridge and Rauch~\cite{JKR,JPCKRA} for the study 
of absorbing boundary conditions for the linearized 2D gravity water-wave equations. 
Another method, which is widespread to study wave equations, 
consists in damping outgoing waves in an absorbing zone surrounding 
the computational boundary (see \cite{Israeli1981,Tsynkov,Bodony2006}). 
For the water-wave equations, the idea of using 
the latter method goes back to Le M\'ehaut\'e~\cite{Mehaute1972} in 1972. 
This approach is very important for the analysis of 
the water-wave equations for at least two 
reasons. 
Firstly, it is used in many numerical studies 
(we refer to \cite{CBS1993,Grilli1997,Clement1999,Duclos2001,Bonnefoy2005,Clamond2005,Ducrozet2007} and 
the references there in) as an efficient 
approach to absorb outgoing waves. 
Secondly, the idea of adding an absorbing layer is also useful 
for the experimental study of water waves in wave basins. 
Indeed, think of a rectangular wave basin, having vertical walls, 
equipped with a wave-maker at one extremity. 
The waves generated by the wave-maker will 
be reflected at the opposite side and then will interact 
with the wave produced by the wave-maker. 
Consequently, to simulate experimentally the open sea propagation, 
one has to introduce 
wave absorbers to minimize wave reflection. 

The mathematical study of the damping properties of these absorbers 
corresponds to the mathematical question of the stabilization of the water-wave equations. 
Our goal in this paper is to start the analysis of this problem for the nonlinear water-wave equations. 

There is a huge literature about the absorption of water-wave energy. We refer the reader to 
the literature review by Ouellet and Datta \cite{Ouellet1986} for a description of the energy 
absorbing devices commonly used in 48 wave basins around the world. 
The most popular wave absorbers are passive absorbers. They consist of  
a beach with a mild slope. The principle is that, when arriving to the artificial beach, 
the steepening of the forward face of waves and their subsequent 
overturning dissipates energy. Another widely used strategy is to introduce a porous media 
to absorb the wave energy. 
The mathematical analysis of these absorbing devices raises extremely difficult questions. 
Consequently, 
to stabilize the water-wave equations or to develop numerical absorbing sponges, 
one prefers to use simpler means to dissipate energy. For similar problems, the 
simplest choice could be to use viscous damping, but this is not possible here 
since one considers a potential flow (so that the velocity is harmonic). 
For such a flow, the energy can only be transmitted or dissipated through the free surface. 
This suggests to consider a pneumatic wave maker, that is to say a wave maker where 
the variations of the external pressure acting on the free surface are used to absorb waves. 
This idea goes back to the work by Larsen and Dancy~\cite{Larsen1983}. 
It has been widely used and many elaborations and variants 
have been implemented, in particular by Cl\'ement~\cite{Clement1996} who proposed 
to couple the pneumatic wave-maker with a piston-like absorbing boundary condition at the tank extremity 
(see also \cite{CBS1993,Grilli1997,Clement1999,Duclos2001,Bonnefoy2005,Clamond2005,Ducrozet2007}).

Let us be more specific. Denote by $\mathcal{H}$ the energy of the fluid and by 
$P_{ext}$ the evaluation of the external pressure at the free surface. 
The question is to find an expression 
of $P_{ext}$ in terms of the unknowns such that the following two properties hold:
\begin{enumerate}
\item $P_{ext}$ vanishes away from the artificial beach (also called sponge layer) 
which is the neighborhood of the boundary where one wants to absorb the waves;
\item the energy $\mathcal{H}$ goes to zero (one also wants to determine the rate of decay).
\end{enumerate}
One can easily compute the work done by $P_{ext}$ (see \S\ref{S:HD}) and obtain that
$$
\frac{d\mathcal{H}}{dt}=-\int_{S_{beach}} P_{ext}\, \phi_n  \dsigma,
$$
where $S_{beach}$ is the absorbing zone and $\phi_n$ denotes 
the normal derivative of the velocity potential $\phi$. 
As noted by Cao, Beck and Schultz (\cite{CBS1993}), 
this suggests to set
\be\label{p1}
P_{ext}=\chi \phi_n,
\ee
where $\chi\ge 0$ is a cut-off function. Indeed, with this choice it is obvious that the energy is a non-increasing function. 
The previous observation explains why this choice 
is widespread (see \cite{Grilli1997,Bonnefoy2005,Ducrozet2007}Ê
and the references there in). 

\bigskip

However, to study the stabilization of the water-wave equations, 
the idea of choosing \e{p1} is inapplicable for 
the simple reason that the Cauchy problem seems ill-posed when $P_{ext}$ is given by \e{p1}. 
This question will be studied in a separate paper. Let us only mention that it is a non trivial problem. Indeed, one can modify slightly 
\e{p1} and obtain a system of equations whose Cauchy problem is well-posed. Namely, 
if one replaces the normal derivative $\phi_n$ by the derivative of $\phi$ in the vertical direction, 
then the Cauchy problem is well-posed. 
However, one cannot use the latter choice to stabilize the equations since one cannot prove that the energy is decaying. 

Many other choices for $P_{ext}$ have been used (see for instance the papers by 
Baker, Meiron and Orszag~\cite{Baker1989} and Clamond et al.\ \cite{Clamond2005}) but 
we have not been able to use one of them for the same reasons (either the Cauchy problem is not well-posed 
or one cannot prove that the energy is decaying). 
To overcome this problem, we shall take benefit of an elementary (though seemingly new) observation which
shows that the energy is decaying when $P_{ext}$ satisfies
\be\label{p2}
\partial_x P_{ext}=\chi(x)\int_{-h}^{\eta(t,x)}\phi_x(t,x,y)\dy,
\ee
where $\chi\ge 0$ is a cut-off function, $x$ (resp.\ $y$) is the horizontal (resp.\ vertical) space variable and $\eta$ is the free surface elevation. 
By contrast with \e{p1}, one can easily prove that the Cauchy problem is well-posed when $P_{ext}$ satisfies~\e{p2}. 
In addition, by exploiting several hidden cancellations, we will be able to quantity the decay rate, that is to estimate the ratio 
$\Hr(T)/\Hr(0)$. By assuming that the solution exists on large time interval, this will imply that the energy converges exponentially to zero. 

\bigskip

To conclude this introduction, let us mention that we study only the stabilization problem 
in this paper and we refer to \cite{ABHK,JCHG,Reid1986,Reid1995,ReidRussell1985} 
for the analysis of the generation of water waves in a pneumatic wave maker. 

\subsection*{Organization of the paper}
We gather the statements of our main results in Section~\ref{S:2}. 
Our first main result is an 
integral identity (see Theorem \ref{T1bis}) 
which allows to compare the integral in time of the energy to 
the work done by the external pressure. This identity, which holds for 
any solution and {\em any} external pressure, 
will be proved in Section~\ref{S:5} by adapting the multiplier method to the water-wave problem. 
Since we do not assume that the reader is familiar with control theory, before proving this result 
we will recall in Section~\ref{S:multiplier} some important methods and results. 
We will also explain the main difficulties one has to cope with when adapting these methods to the study of the 
water-wave equations.

As already mentioned, the energy decays when $P_{ext}$ is given either by \e{p1} or \e{p2}. 
In addition, as we will see in Section~\ref{S:smooth}, the Cauchy problem is well-posed when $P_{ext}$ is given by \e{p2}. 
This is why we assume that $P_{ext}$ is given by \e{p2}. 
Our second main result, which is Theorem~\ref{P:quant}, asserts that, by exploiting the integral identity alluded to above, 
one can quantify the decay rate of the energy for small enough solutions. Assuming that the solution exists on large time intervals, 
we will obtain an exponential decay (cf Corollary \ref{Coro:2.4}). This result is stated in Section~\ref{S:2} and proved in Section~\ref{S:6}. 
The latter result holds under a natural assumption about the frequency localization of the solution. 

Eventually, in Appendix~\ref{S:linear} we will prove 
Sobolev estimates for the linearized problem. Also, in Appendix~\ref{A:Identity} we will prove another integral identity, which is not used to prove a stabilization result, but gives an interesting observability inequality.

\bigskip

\subsection*{Acknowledgements}

I gratefully acknowledge 
F\'elicien Bonnefoy and Guillaume Ducrozet for a demonstration of the wave tank of the \'Ecole Centrale de Nantes. 
I would like also to warmly thank Nicolas Burq, Jean-Michel Coron, Emmanuel Dormy and Camille Laurent for stimulating discussions.

\section{Main results}\label{S:2}

\begin{center}
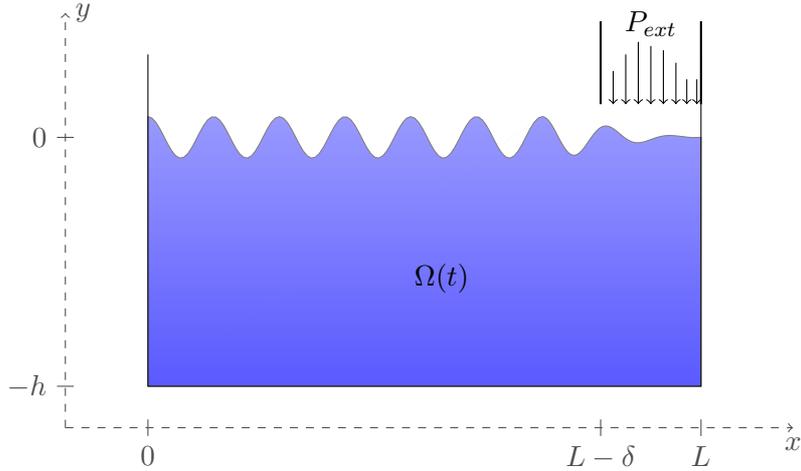
\begin{figure}
\begin{tikzpicture}[scale=1.1,samples=100]
 \shadedraw  [top color=blue!40!white, bottom color=blue!65!white, color=gray] 
(-6.2124,-3) -- (-6.2124,0) --  
plot [domain=0:4.7124,shift={(-6.2124,0)}] ({\x},{0.25*cos(8*\x r)}) 
 plot [domain=0:1.8,shift={(-1.5,0)}] ({\x},{0.25*exp(-\x*\x)*cos(8*\x r)}) -- (0.4,0) -- (0.4,-3) -- (-6.2124,-3)  ;
\draw (-6.2124,1)--(-6.2124,-3) -- (0.4,-3) -- (0.4,1);

\draw [style=thick,shift={(-0.35,0.9)}] (-0.45,0.5) -- (-0.45,-0.5) ;
\draw [style=thin,->,shift={(-0.35,0.9)}] (-0.3,-0.1) -- (-0.3,-0.5);
\draw [style=thin,->,shift={(-0.35,0.9)}] (-0.15,0.1) -- (-0.15,-0.5);
\draw [style=thin,->,shift={(-0.35,0.9)}] (0,0.25) -- (0,-0.5);
\draw [style=thin,->,shift={(-0.35,0.9)}] (0.15,0.2) node [above] {$P_{ext}$} -- (0.15,-0.5);
\draw [style=thin,->,shift={(-0.35,0.9)}] (0.3,0.15)  -- (0.3,-0.5);
\draw [style=thin,->,shift={(-0.35,0.9)}] (0.45,0) -- (0.45,-0.5);
\draw [style=thin,->,shift={(-0.35,0.9)}] (0.58,-0.2) -- (0.58,-0.5);
\draw [style=thin,->,shift={(-0.35,0.9)}] (0.7,-0.2) -- (0.7,-0.5);
\draw [style=thick,shift={(-0.35,0.9)}] (0.75,0.5) -- (0.75,-0.5) ;
\node at (-2.7,-1.7) {$\Omega(t)$};
\draw [color=black!70!white,->,dashed] (-7.2,-3.5) -- (-7.2,1.5) node [right] {$y$};
\draw [color=black!70!white,->,dashed] (-7.2,-3.5) -- (1.5,-3.5) node [below] {$x$};
\draw [color=black!70!white] (-0.8,-3.4) -- (-0.8,-3.6) node [below] {$L-\delta$};
\draw [color=black!70!white] (0.4,-3.4) -- (0.4,-3.6) node [below] {$L$};
\draw [color=black!70!white] (-6.2124,-3.4) -- (-6.2124,-3.6) node [below] {$0$} ;
\draw [color=black!70!white] (-7.3,-3) -- (-7.1,-3) ;
\draw [color=black!70!white] (-7.3,0) -- (-7.1,0) ;
\node [color=black!70!white] at (-7.3,0) [left] {$0$};
\node [color=black!70!white] at (-7.3,-3) [left] {$-h$} ;
\end{tikzpicture}
\caption{Waves generated near $x=0$, propagating to the right, and absorbed in the neighborhood of $x=L$ 
by means of an external counteracting pressure produced by blowing above the free surface.}
\end{figure}
\end{center}

\subsection{The equations} 
We assume that the dynamics is described by the incompressible Euler equations with free surface and consider the 
irrotational case. For the sake of simplicity, we consider a two-dimensional fluid located inside a rectangular tank. 
The water depth is denoted by $h$, the length by $L$ and the free surface elevation by~$\eta$. 
At time~$t$, the fluid domain is 
thus given by
\be\label{b1}
\Omega(t)=\left\{\, (x,y)\,:\, x\in [0,L],~-h\le y\le \eta(t,x)\,\right\},
\ee
where $x$ (resp.\ $y$) is the horizontal (resp.\ vertical) space variable. 

Then the velocity is given by 
$v=\nabla_{x,y}\phi$ for some potential $\phi\colon \Omega\rightarrow \xR$ satisfying
\begin{equation}\label{b5}
\Delta_{x,y}\phi=0,\quad 
\partial_{t} \phi +\mez \la \nabla_{x,y}\phi\ra^2 +P +g y = 0,
\end{equation}
where 
$P\colon \Omega\rightarrow\xR$ is the pressure, 
$g$ is the acceleration of gravity, 
$\nabla_{x,y}=(\partial_x,\partial_y)$ and $\Delta_{x,y}=\partial_x^2+\partial_y^2$. 
Partial differentiation will be denoted by suffixes, so that $\phi_x=\partial_x \phi$ and $\phi_y=\partial_y \phi$ 
(except for $\px P_{ext}$). 
Furthermore, the velocity 
satisfies the solid wall boundary condition on the bottom and the vertical walls, which implies that
\begin{alignat}{3}
&\phi_x =0 \quad&&\text{for}\quad && x=0 \text{ or }x=L,\label{b6}\\
&\phi_y =0 \quad&&\text{for}\quad && y=-h.\label{b7}
\end{alignat}
The problem is then determined by two boundary conditions on the free surface. The first equation 
asserts that the free surface moves with the fluid:
\be\label{b8}
\partial_{t} \eta = \sqrt{1+\eta_x^2}\, \phi_n \arrowvert_{y=\eta}=\phi_y(t,x,\eta)-
\eta_x(t,x)\phi_x(t,x,\eta).
\ee
The second equation is a balance of forces across the free surface. It reads
\be\label{b9}
P\arrowvert_{y=\eta}=P_{ext},
\ee
where $P_{ext}=P_{ext}(t,x)$ is the evaluation of the external pressure at the free surface.

Also we always assume (without explicitly recalling this condition below) that
\be\label{b9b}
\eta\ge -\frac{h}{2},\quad \int_0^L\eta(t,x) \dx=0 \text{ for all time }t.
\ee
One can assume that the mean value of $\eta$ vanishes since it is a conserved quantity. We also
assume that the free surface intersects the vertical walls\footnote{When $P_{ext}=0$, it is proved in \cite{ABZ4} 
that \e{b9a}Ê
always holds for smooth enough solutions. In fact the analysis in \cite{ABZ4} is written only for 
the case $P_{ext}=0$. However, the argument still applies when $P_{ext}\neq 0$ provided that $\partial_x P_{ext}(t,x)=0$ when 
$x=0$ or $x=L$.}  orthogonally:
\be\label{b9a}
\eta_x=0\quad\text{for}\quad x=0\text{ or }x=L.
\ee

Following Zakharov~\cite{Zakharov1968} and Craig--Sulem~\cite{CrSu}, we 
work with the evaluation of~$\phi$ at the free boundary
$$
\psi(t,x)\defn\phi(t,x,\eta(t,x)).
$$
Notice that $\phi$ is fully determined by its trace $\psi$ since $\phi$ is harmonic and satisfies $\phi_n=0$ on the walls and the bottom. Now, to obtain a system of two evolution equations for~$\eta$ and 
$\psi$, one introduces the Dirichlet to Neumann operator~$G(\eta)$ 
that relates~$\psi$ to the normal derivative of the potential by 
$$
G(\eta)\psi=\sqrt{1+\eta_x^2}\, \phi_n \arrowvert_{y=\eta}=(\phi_y-\eta_x \phi_x)\aeta.
$$
Then, it follows from \e{b8} that $\partial_t \eta=G(\eta)\psi$. Directly from \e{b5} we infer that
\begin{align}
&\partial_t \psi+g\eta +N(\eta)\psi+g\eta=-P_{ext},\quad\text{where}\notag\\
&
N(\eta)\psi=\mathcal{N} \big\arrowvert_{y=\eta}\quad\text{with}\quad
\mathcal{N}=
\mez\phi_x^2-\mez \phi_y^2+\eta_x \phi_x \phi_y.\label{b90}
\end{align}
With these notations, the water-wave system reads
\be\label{systemG}
\left\{
\begin{aligned}
&\partial_t \eta=G(\eta)\psi,\\
&\partial_t \psi+g\eta +N(\eta)\psi=-P_{ext}.
\end{aligned}
\right.
\ee

Introduce the energy $\mathcal{H}$, which is the 
sum of the potential and kinetic energies:
\be\label{b2}
\mathcal{H}(t)=\frac{g}{2}\int_0^L\eta^2(t,x)\dx
+\mez\iint_{\Omega(t)}\la \nabla_{x,y}\phi(t,x,y)\ra^2\dx\dy.
\ee
If $P_{ext}=0$, then $\Hr(t)=\Hr(0)$ for all time. 
Our goal is to find $P_{ext}$ such that:
\begin{enumerate}[(i)]
\item the variation of the external pressure are localized in the absorbing beach:
$$
{\text{ supp}}\, \px P_{ext}(t,\cdot)\subset [L-\delta,L]
$$
where $\delta>0$ is the length of the absorbing beach,
\item \label{ii} $\Hr$ is decreasing,
\item \label{iii} there exists a positive constant $C$ such that 
\be\label{b190}
\int_0^T\Hr(t)\dt\le C\Hr(0).
\ee
\end{enumerate}
One deduces from \e{ii} and \e{iii} that
$$
\Hr(T)\le \frac{1}{T} \int_0^T\Hr(t)\dt\le \frac{C}{T}\Hr(0),
$$
which implies an {\em exponential decay of the energy}. 
Indeed, for $T\ge 2C$, this gives $\Hr(T)\le 2^{-1} \Hr(0)$ and hence $\Hr(nT)\le 2^{-n}\Hr(0)$.

\subsection{Integral identity}
To prove the key estimate \e{b190}, the main difficulty is to compute the integral of the energy $\mathcal{H}$. 
To do so, we will prove an exact integral identity, of the form 
\be\label{b180}
\int_0^T (\Hr(t)+I(t))\dt =\int_0^T (W(t)+O(t)+N(t))\dt+B,
\ee
where the following properties hold:
\begin{itemize}
\item $I\ge 0$ and hence \e{b180} gives an upper bound for $\int_0^T\mathcal{H}\dt$.
\item $W$ depends on the pressure (if $P_{ext}=0$ then $W=0$).
\item $O$ is an {\em observation term} which means that it depends only on the behavior of the solutions near the wall $\{x=L\}$ (in the identity \e{b49} below 
this requires to chose $m=x$ for $x\in [0,L-\delta]$). 
\item $B$ is of the form $B=\int_0^L (F(T,x)-F(0,x))\dx$, for some function $F$. 
The key feature of this term is that, since it is not an integral in time, we can neglect $B$ for $T$ large enough.
\item $N$ is a cubic term while the energy $\mathcal{H}$ and the terms $I,W,O,B$ are quadratic terms. 
This implies that, for the linearized water-wave equations, the same identity holds with $N=0$. So the only difference between the nonlinear problem and the linear one is described by~$N$. 
Perhaps surprisingly, this term has a simple expression. Indeed, it is given by
$$
N(t)=\iint_{\Omega(t)} \rho_x \phi_x \phi_y\dydx-\int_0^L \frac{\rho}{2}\phi_x^2(t,x,-h)\dx,
$$
for some function $\rho$ depending linearly on $\eta$. A key point is that $N(t)\le \Hr(t)+I(t)$ for $\rho$ small enough.
\end{itemize}

In this paper we consider regular solutions of the water-wave system \e{systemG}. 
We postpone the definition of a regular solution to \S\ref{S:smooth} (see Definition~\ref{D:4.3}). 
Let us mention that, essentially, this definition is quite general since 
we only require that the free surface elevation $\eta(t,x)$ is $C^2$ in $x$ and the velocity $\nabla_{x,y}\phi\arrowvert_{y=\eta(t,x)}$ is $C^1$ in $x$. 

Here is our first main result.
\begin{theo}\label{T1bis}
Let $m\in C^\infty([0,L])$ be such that $m(0)=m(L)=0$ and set 
$$
\zeta=\partial_x(m\eta)-\uq \eta+\frac{1-m_x}{2}\eta,\quad 
\rho=(m-x)\eta_x+\left(\frac{5}{4}+\frac{m_x}{2}\right)\eta.
$$
Then, for any pressure $P_{ext}=P_{ext}(t,x)$ and any regular solution of \e{systemG} defined on the time interval $[0,T]$, 
there holds
\be\label{b49}
\begin{aligned}
\mez \int_0^T \Hr(t)\dt+\mathcal{Q}&=
-\int_0^T\int_0^L P_{ext}\, \zeta\, \dxdt-\int_0^L \zeta \psi \dx\ST\\
&\quad +\int_0^T\int_0^L \left(\frac{1-m_x}{2}\psi +(x-m)\psi_x\right)G(\eta)\psi\dxdt \\
&\quad +\int_0^T\iint_{\Omega(t)} \rho_x \phi_x \phi_y\dydxdt,
\end{aligned}
\ee
where
\be\label{b62}
\mathcal{Q}= \int_0^T\int_0^L \left(\frac{h}{2}+\frac{\rho}{2}\right)\phi_x^2(t,x,-h)\dxdt+\frac{L}{2}\int_0^T\int_{-h}^{\eta(t,L)}\phi_y^2(t,L,y)\dydt. 
\ee
\end{theo}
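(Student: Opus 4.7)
The plan is a three-piece multiplier-method argument. The key facts are: multiplying the Bernoulli equation by $\zeta$ will generate half the potential energy, via the algebraic identity $\intL g\eta\,\zeta\,\dx=\frac{g}{4}\intL\eta^2\,\dx$ (a consequence of $m(0)=m(L)=0$); the Green identity $\intL\psi\,G(\eta)\psi\,\dx=\iint_{\Omega(t)}|\nabla_{x,y}\phi|^2\,\dydx$ will generate half the kinetic energy; a Rellich--Pohozaev identity applied to $\phi$ in $\Omega(t)$ will produce the purely quadratic part of $\mathcal{Q}$ together with a free-surface remainder; and a final cubic identity proved by integration by parts in $y$ (using $\phi_{xx}+\phi_{yy}=0$) will reconcile the nonlinear parts.

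In detail: multiply $\partial_t\psi+g\eta+N(\eta)\psi=-P_{ext}$ by $\zeta$ and integrate on $[0,T]\times[0,L]$. Integration by parts in $t$ converts $\intTL\zeta\,\partial_t\psi\,\dxdt$ into $\intL\zeta\psi\,\dx\ST-\intTL\psi\,\partial_t\zeta\,\dxdt$; using $\partial_t\eta=G(\eta)\psi$ one obtains $\partial_t\zeta=\partial_x(m\,G(\eta)\psi)+\frac{1-2m_x}{4}G(\eta)\psi$, and a further integration by parts in $x$ (relying again on $m(0)=m(L)=0$) rewrites $\intL\psi\,\partial_t\zeta\,\dx$ as $-\intL m\psi_x\,G(\eta)\psi\,\dx+\intL\frac{1-2m_x}{4}\psi\,G(\eta)\psi\,\dx$. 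Separately, at each fixed $t$ apply the Pohozaev identity in $\Omega(t)$ to the harmonic function $\phi$ with multiplier $M=x\phi_x+y\phi_y$. Because $\operatorname{div}(x,y)=2$ and $D(x,y)=\mathrm{Id}$, the bulk integrand vanishes and the identity reduces to
\[
0=\int_{\partial\Omega(t)}M\phi_n\,\dsigma-\frac{1}{2}\int_{\partial\Omega(t)}(xn_x+yn_y)|\nabla_{x,y}\phi|^2\,\dsigma.
\]
The wall conditions $\phi_x|_{x=0,L}=0$ and $\phi_y|_{y=-h}=0$ reduce three of the four boundary pieces to the purely quadratic part $\frac{h}{2}\phi_x^2(t,x,-h)+\frac{L}{2}\phi_y^2(t,L,y)$ of $\mathcal{Q}$, while the free-surface contribution at $y=\eta$ can be reorganised, using $\phi_x\aeta=\psi_x-\eta_x\phi_y\aeta$ and $(\phi_y-\eta_x\phi_x)\aeta=G(\eta)\psi$, into $\intL x\psi_x\,G(\eta)\psi\,\dx-\intL(\eta-x\eta_x)N(\eta)\psi\,\dx$.

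Combining these computations, using $\frac{1}{4}\intL\psi\,G(\eta)\psi\,\dx=\tfrac{1}{2}\mathrm{KE}(t)$ to supply the kinetic half of $\mathcal H$, and noting the key coincidence $\zeta+(\eta-x\eta_x)=\rho$, all quadratic terms assemble exactly into \eqref{b49}, and the argument collapses to the single cubic identity
\[
\iint_{\Omega(t)}\rho_x\phi_x\phi_y\,\dydx=-\intL\rho\,N(\eta)\psi\,\dx+\intL\frac{\rho}{2}\phi_x^2(t,x,-h)\,\dx,
\]
which I would prove by integrating $\rho_x\phi_x\phi_y$ by parts in $x$ (the wall boundary terms vanish) and then using the harmonic identity $\partial_x(\phi_x\phi_y)=\partial_y\bigl(\tfrac{1}{2}(\phi_x^2-\phi_y^2)\bigr)$ to integrate by parts once more in $y$ against the $x$-dependent weight $\rho$, the bottom trace surviving because $\phi_y|_{y=-h}=0$. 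The main obstacle is the surface algebra in the Pohozaev step: rewriting $(x\phi_x+\eta\phi_y)\aeta G(\eta)\psi-\tfrac{1}{2}(\eta-x\eta_x)(\phi_x^2+\phi_y^2)\aeta$ in Dirichlet--Neumann variables requires a careful substitution, but once it is done, the identity $\zeta+(\eta-x\eta_x)=\rho$ explains why the cubic remainders match exactly, and everything else is routine bookkeeping.
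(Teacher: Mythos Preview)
Your argument is correct and rests on the same three ingredients as the paper: a multiplier identity yielding the intermediate formula \eqref{b47bis}, the Pohozaev identity $\int_0^L x\psi_x\,G(\eta)\psi\,dx=\Sigma+\int_0^L(\eta-x\eta_x)N(\eta)\psi\,dx$ (quoted in the paper from \cite{Boundary}), and the cubic identity \eqref{b110}. The only difference is organizational: the paper derives \eqref{b47bis} in two stages---a symplectic multiplier computation with weight $m\partial_x$ (Lemma~\ref{P1}, via the auxiliary quantity $\Theta=-\eta\,\partial_t\psi-\tfrac{g}{2}\eta^2$) followed by a separate equipartition identity (Lemma~\ref{L3}), combined in Lemma~\ref{L4}---whereas you multiply the Bernoulli equation directly by the assembled weight $\zeta$, which collapses those two stages into one thanks to the algebraic fact $\int_0^L g\eta\,\zeta\,dx=\tfrac{g}{4}\int_0^L\eta^2\,dx$ and the splitting $\tfrac{1-2m_x}{4}=\tfrac{1-m_x}{2}-\tfrac14$. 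Your route is a little more direct; the paper's decomposition has the virtue of isolating the equipartition mechanism, which it reuses in Appendix~\ref{A:Identity}. Your proof of the cubic identity (integrate $\rho_x\phi_x\phi_y$ by parts in $x$ via the divergence theorem, then use $\partial_x(\phi_x\phi_y)=\partial_y\bigl(\tfrac12(\phi_x^2-\phi_y^2)\bigr)$ and integrate in $y$) is exactly the paper's proof of \eqref{b110} unwound.
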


\begin{rema}
$(i)$ In \cite{Boundary} we proved a similar identity when $m(x)=x$ (assuming that $P_{ext}=0$). 
This weight does not vanish on $x=L$ and the identity proved in \cite{Boundary} 
was used to deduce only a boundary observability result. As explained in \S\ref{S:multiplier}, one cannot 
exploit easily this boundary observability result to study the stabilization problem. 
By contrast, the previous identity will allow us to study this problem.

$(ii)$ Assume that $P_{ext}=0$ and consider a small enough solution. Then, firstly, $\mathcal{Q}\ge 0$ and, secondly,  
one can absorb the term involving $\rho_x \phi_x\phi_y$ in the left-hand side. 
Since $\int_0^T \Hr(t)\dt=T\Hr(T)$ (since $P_{ext}=0$), we see that, loosely speaking, 
taking $T$ large enough, one can also absorb $\int_0^L \zeta \psi \dx\big\arrowvert_{0}^T$ in the left-hand side 
(as explained in \cite{Boundary}, to justify this argument requires some effort). Then we obtain an observability inequality, that is an 
estimate of the energy by means of the observation term $\int_0^T\int_0^L \left(\frac{1-m_x}{2}\psi +(x-m)\psi_x\right)G(\eta)\psi\dxdt$ 
(if $m(x)=x$ for $0\le x\le L-\delta$, the latter expression 
depends only on the behavior of $\eta,\psi$ in the neighborhood of $\{x=L\}$). 
In Appendix~\ref{A:Identity} we prove another integral identity which involves another observation term. 

\end{rema}

\subsection{Choice of the external pressure --- Hamiltonian damping}\label{S:HD}
As already mentioned, if $P_{ext}=0$ then the energy is conserved, that is $\Hr(t)=\Hr(0)$. 
Our goal is to find $P_{ext}$ so that the energy converges to zero. 

For the approach developed in this paper, 
there are five simple principles which govern the choice of $P_{ext}$: 
\begin{enumerate}
\item\label{Pr1} The energy $\Hr(t)$ must be {\em decreasing}. 

\item\label{Pr2} The {\em Cauchy problem for~\e{systemG} has to be well-posed}.

\item\label{Pr3} One could think that the stronger the damping, the faster the decay. However, 
in a somewhat counter-intuitive way, this is not the case. 
As will be clear in the proof, 
we need a bound of $P_{ext}$ in terms of the energy. 

\item\label{Pr4} {\em Localization}: we require that the derivative of the pressure $P_{ext}$ is localized in a neighborhood of $x=L$. 

\item\label{Pr5} {\em Boundary condition}: 
as already mentioned, to propagate the right-angle condition between 
the free surface and the wall (see \e{b9a}), 
the pressure $P_{ext}$ must satisfy $\px P_{ext}(t,x)=0$ for $x=L$.
\end{enumerate}

In this paragraph we give an expression for $P_{ext}$ in terms of the unknowns such that the above five conditions are satisfied. 

We begin by computing the work done by the pressure $P_{ext}$. In doing so, it is convenient to exploit the hamiltonian structure of the equation. 
Recall from Craig--Sulem (\cite{CrSu}) 
that $\Hr$ can be expressed as a function 
of $\eta$ and $\psi$,
$$
\mathcal{H}=\mez \int_0^L \big(g\eta^2+\psi G(\eta)\psi\big)\dx.
$$
Then, as observed by Zakharov~\cite{Zakharov1968}, the water-wave system can be written as
\footnote{The computations by Zakharov in \cite{Zakharov1968} are written only for periodic waves and $P_{ext}=0$ but 
the argument holds also in a rectangular tank with an external pressure. }
\be\label{b153}
\left\{
\begin{aligned}
&\frac{\partial\eta}{\partial t}=\frac{\delta \mathcal{H}}{\delta \psi},\\
&\frac{\partial\psi}{\partial t}=-\frac{\delta \mathcal{H}}{\delta \eta}-P_{ext}.
\end{aligned}
\right.
\ee
Then write
$$
\frac{d\mathcal{H}}{dt}=\int \left[\frac{\delta \mathcal{H}}{\delta \eta}\frac{\partial\eta}{\partial t}+\frac{\delta \mathcal{H}}{\delta \psi}\frac{\partial\psi}{\partial t}\right] \dx=
\int \left[\frac{\delta \mathcal{H}}{\delta \eta}\frac{\delta \mathcal{H}}{\delta \psi}-\frac{\delta \mathcal{H}}{\delta \psi}\frac{\delta \mathcal{H}}{\delta \eta}-\frac{\delta \mathcal{H}}{\delta \psi}P_{ext}\right] \dx,
$$
to deduce
\be\label{s1}
\frac{d\mathcal{H}}{dt}=-\int \frac{\delta \mathcal{H}}{\delta \psi}P_{ext} \dx=-\int_0^L \frac{\partial\eta}{\partial t} P_{ext}  \dx=-\int_0^L   P_{ext} G(\eta)\psi \dx.
\ee
This identity can be obtained directly from the definition \e{b2} of the energy, using the equations and the Stokes' formula. 

Since we want to force the energy to decrease to $0$, this suggests to chose $P_{ext}=P_{ext}(t,x)$ under the form 
$P_{ext}=\chi G(\eta)\psi$ where $\chi\ge 0$ is a compactly supported function satisfying $\chi=1$ on a neighborhood of $x=L$. 
As mentioned in the introduction, this choice is widespread and we pause to discuss it. 
Firstly, with this choice, the principles (P\ref{Pr1}) and (P\ref{Pr4}) are clearly satisfied. 
The principle (P\ref{Pr5}) is also satisfied since $G(\eta)\psi=\partial_t \eta$ and since $\partial_t\eta$ satisfies the same 
boundary condition \e{b9a} as $\eta$. To see that (P\ref{Pr3}) also holds, write
$$
\int_0^L P_{ext}(t,x)^2\, \dx =\int_0^L \big( \chi (\partial_t\eta)\big)^2\dx
\le (\sup \chi)\int_0^L\partial_t\eta P_{ext}\dx=-(\sup \chi)\frac{d\mathcal{H}}{dt},
$$
where we used \e{s1}. It follows that we have the estimate
$$
\int_{0}^T \int_0^L P_{ext}(t,x)^2\, \dxdt\le (\sup\chi)(\Hr(0)-\Hr(T))\le (\sup\chi)\Hr(0).
$$
However, we are not able to prove that the Cauchy problem for \e{systemG} is well-posed when $P_{ext}$ is given by 
$\chi G(\eta)\psi$ (except for the linearized equations). 

So we need to use another choice for $P_{ext}$. 
In this direction, we make the following elementary observation: by definition of $G(\eta)\psi$, it follows from the divergence theorem that
$$
\int_0^L   P_{ext} G(\eta)\psi \dx=\int_{\{y=\eta\}} P_{ext}\phi_n \dsigma=\iint \nabla_{x,y}\cdot (P_{ext}\nabla_{x,y}\phi)\dydx.
$$
Since $\Delta_{x,y}\phi=0$ and since $P_{ext}$ does not depend on $y$, it follows from \e{s1} that
$$
\frac{d\mathcal{H}}{dt}=-\int_0^L (\px P_{ext})\, \overline{V}\dx \quad\text{with}\quad
\overline{V}(t,x)=\int_{-h}^{\eta(t,x)}\phi_x(t,x,y)\dy.
$$
Since we want to force $\mathcal{H}$ to decrease, we set 
\be\label{p201a}
\px P_{ext}(t,x)=\chi(x) \overline{V}(t,x),
\ee
where $\chi\ge 0$ is a $C^\infty$ cut-off function satisfying $\chi=1$ on a neighborhood of $x=L$. The pressure $P_{ext}$ is defined up to a constant depending on time 
and to fix this constant we require that $P_{ext}(t,\cdot)$ has mean value $0$ on $(0,L)$. 

Clearly, with \e{p201a}, the conditions (P\ref{Pr1}), (P\ref{Pr4}) and (P\ref{Pr5}) are satisfied (for (P\ref{Pr5}) we use the boundary condition 
$\phi_x\arrowvert_{x=L}=0$ to obtain that $\px P_{ext}\arrowvert_{x=L}=0$). 
By contrast with the previous choice, we will see in \S\ref{S:4.2} 
that it is easy to prove that the Cauchy problem is well-posed, which means that the condition (P\ref{Pr2}) is now satisfied. 
Eventually, to see that (P\ref{Pr3}) also holds, we write
$$
\int_0^L (\px P_{ext})^2\, \dx =\int_0^L \big( \chi\overline{V}\big)^2\dx
\le (\sup \chi)\int_0^L(\partial_x P_{ext})\overline{V}\dx=-(\sup \chi)\frac{d\mathcal{H}}{dt},
$$
which shows that 
\be\label{p220}
\int_{0}^T \int_0^L (\px P_{ext}(t,x))^2\, \dxdt\le (\sup\chi)\Hr(0).
\ee

\subsection{A quantitative estimate}

Our second main result gives an inequality of the form 
$\Hr(T)\le (C/T)\Hr(0)$, for some constant $C$ depending on parameters which are considered fixed. 
As already mentioned, this will imply that, if the solution exists on time long time intervals of size $nT$ with $T\ge 2C$, then 
the energy converges exponentially fast to zero, so that 
$\Hr(T)\le 2^{-n}\Hr(0)$. In fact, we will obtain a weaker bound, of the form $\Hr(T)\le (C/\sqrt{T})\Hr(0)$.

A key feature of the water-wave problem is that the constant $C$ must depend on the frequency localization of $\eta$ and $\psi$. This can be easily understood by considering the linearized equations. Indeed, remembering that for these linear 
equations the dispersion relationship reads $\omega^2(k)=g|k|$, we see that 
high frequency waves propagate at a speed proportional 
to $|k|^{-1/2}$, which goes to $0$ when $|k|$ goes to $+\infty$. 
Now think of waves generated near $\{x=0\}$. The time needed to reach the absorption layer (located near $\{x=L\}$) will depend on the frequency, and 
moreover will goes to $+\infty$ when $|k|$ goes to $+\infty$. 
This explains that the result depends on the frequency localization of the solutions, 
in sharp contrast with the study of other wave equations. 
This observation goes back to Reid and Russell (\cite{ReidRussell1985}) who studied the controllability in infinite time of the linearized equations.

The following result gives a quantitative estimate of the form $T \Hr(T)\le C\Hr(0)$ where the constant $C$ depends on the frequency localization of the solutions. 
Since we consider the nonlinear equations, we cannot use Fourier analysis 
to measure the frequency localization of the solutions. We will consider 
instead some ratios 
between the energy and the $L^2$-norm of the derivatives of the unknown.

\begin{theo}\label{P:quant}
Denote by $\delta>0$ the length of the absorbing zone. 
Consider two functions $\chi,m$ in $C^\infty([0,L])$ such that:
\be\label{p364}
\begin{aligned}
&0\le \chi\le 1,\quad \chi(x)=1 \text{ if } x\in [L-\delta/2,L], \quad \chi(x)=0\text{ if }x\in  [0,L-\delta],\\[2ex]
&m(x)=x\text{ if }x\in [0,L-\delta/2],\quad m(L)=0.
\end{aligned}
\ee
Assume that
\be\label{b240}
\px P_{ext}(t,x)=\chi(x)\int_{-h}^{\eta(t,x)}\phi_x(t,x,y)\dy \quad\text{and}\quad \int_0^L P_{ext}(t,x)\dx =0,
\ee
and introduce the functions
\begin{align*}
\rho&=(m-x)\eta_x+\left(\frac{5}{4}+\frac{m_x}{2}\right)\eta,\\[1ex]
\Psi_1&=-m\psi_x-\uq \psi +\frac{1-m_x}{2} \psi,\\[1ex]
\Psi_2&= \px \left(\frac{1-m_x}{2}\psi +(x-m)\psi_x\right),
\end{align*}
and
set
\begin{align*}
C(m)&=\sup_{x\in [0,L]}  m(x)+\frac{L}{2}\sup_{x\in [0,L]} \la 1/2-m_x(x)\ra,\\
N_{1,T}(\psi)&=\sup_{t\in [0,T]}\frac{(\int \Psi_1(t,x)^2\dx)^{1/2}}{\Hr(0)^{1/2}},\\
N_{2,T}(\psi)&=\sup_{t\in [0,T]}\frac{(\int \Psi_2(t,x)^2\dx )^{1/2}}{\Hr(0)^{1/2}}.
\end{align*}
If $\rho$ satisfies
\be\label{p362}
\rho(t,x)\ge -h,\quad \la \rho_x(t,x)\ra\le c<\mez.
\ee
Then, for all $\alpha>0$ and for all regular solution of the water-wave system \e{systemG},
\be\label{p400}
T\left(\mez-c-\alpha\right)\Hr(T) \le \left(\frac{C(m)^2}{2\alpha g}+ \sqrt{T} N_{2,T}(\psi)
+\frac{2\sqrt{2}}{\sqrt{g}}N_{1,T}(\psi)\right)\Hr(0).
\ee
\end{theo}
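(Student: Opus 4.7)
The plan is to chain the integral identity of Theorem~\ref{T1bis} with three mechanisms: (i) absorption of the cubic $\rho_x\phi_x\phi_y$ into the left-hand side using $\la\rho_x\ra\le c<\mez$; (ii) the dissipation bound \eqref{p220} coming from the choice \eqref{b240} of $P_{ext}$; and (iii) the monotonicity $\dot{\Hr}\le 0$ derived in \S\ref{S:HD}, which converts a bound on $\int_0^T\Hr\dt$ into the pointwise bound on $T\Hr(T)$. Condition \eqref{p362} makes $(h+\rho)/2\ge 0$, so the term $\mathcal Q$ in \eqref{b49} is nonnegative and can be dropped. For the cubic term, AM--GM together with \eqref{b2} gives $\la\iint\rho_x\phi_x\phi_y\dydx\ra\le c\,\Hr(t)$; after time-integration this moves to the left and reduces the coefficient $\mez$ there to $\mez-c$.

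\emph{Boundary and observation terms.} Integrating the $\partial_x(m\eta)$-piece of $\zeta$ by parts (using $m(0)=m(L)=0$) produces the clean identity $\int_0^L\zeta\psi\dx=\int_0^L\Psi_1\eta\dx$, so with $\|\eta(t)\|_{L^2}^2\le(2/g)\Hr(t)\le(2/g)\Hr(0)$ one gets $\la\int_0^L\zeta\psi\dx\ST\ra\le\frac{2\sqrt 2}{\sqrt g}N_{1,T}(\psi)\Hr(0)$. For the observation term I would apply the divergence theorem on $\Omega(t)$ to the vector field $F\,\nabla_{x,y}\phi$ with $F=\frac{1-m_x}{2}\psi+(x-m)\psi_x$ extended $y$-independently: harmonicity of $\phi$ kills the bulk $F\Delta\phi$, the wall and bottom contributions vanish by \eqref{b6}--\eqref{b7}, and the top flux is exactly $F\,G(\eta)\psi\dx$. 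This yields $\int_0^L F\,G(\eta)\psi\dx=\iint_\Omega F_x\phi_x\dydx=\int_0^L\Psi_2\,\overline V\dx$, with $\overline V=\int_{-h}^\eta\phi_x\dy$. The crucial structural point is that $m(x)=x$ on $[0,L-\delta/2]$ forces \emph{both} coefficients in $F$ to vanish there, so $\supp\Psi_2\subset[L-\delta/2,L]$ where $\chi\equiv 1$ by \eqref{p364}; hence $\Psi_2\overline V=\Psi_2\,\px P_{ext}$ pointwise, and Cauchy--Schwarz in $(t,x)$ combined with \eqref{p220} gives $\la\int_0^T\!\int F\,G(\eta)\psi\dxdt\ra\le\sqrt T\,N_{2,T}(\psi)\Hr(0)$.

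\emph{Pressure term.} This is the most delicate step. IBP with $m(0)=m(L)=0$ splits
\[
\int_0^L P_{ext}\zeta\dx=-\int_0^L m\eta\,\px P_{ext}\dx+\mez\int_0^L(1/2-m_x)\eta\,P_{ext}\dx.
\]
The first piece is bounded at once by $(\sup m)\|\eta\|_{L^2}\|\px P_{ext}\|_{L^2}$. For the second I would exploit the mean-zero condition on $P_{ext}$: some $y_0\in[0,L]$ satisfies $P_{ext}(y_0)=0$, so the fundamental theorem of calculus and Cauchy--Schwarz yield $\|P_{ext}\|_\infty\le\sqrt L\,\|\px P_{ext}\|_{L^2}$ and hence the (non-sharp) Poincar\'e bound $\|P_{ext}\|_{L^2}\le L\|\px P_{ext}\|_{L^2}$. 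Together this gives the key inequality $\la\int_0^L P_{ext}\zeta\dx\ra\le C(m)\|\eta\|_{L^2}\|\px P_{ext}\|_{L^2}$ with exactly $C(m)=\sup m+\frac{L}{2}\sup\la 1/2-m_x\ra$. Cauchy--Schwarz in time with $\|\eta\|_{L^2}^2\le(2/g)\Hr$ and \eqref{p220}, followed by Young's inequality $ab\le a^2/(4\alpha)+\alpha b^2$, give $\la\int_0^T\!\int P_{ext}\zeta\dxdt\ra\le\frac{C(m)^2}{2\alpha g}\Hr(0)+\alpha\int_0^T\Hr\dt$.

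\emph{Conclusion.} Collecting the four estimates and transferring the $c\int\Hr\dt$ (cubic) and $\alpha\int\Hr\dt$ (Young) contributions to the left-hand side produces $(\mez-c-\alpha)\int_0^T\Hr\dt\le\bigl(\tfrac{C(m)^2}{2\alpha g}+\sqrt T\,N_{2,T}(\psi)+\tfrac{2\sqrt 2}{\sqrt g}N_{1,T}(\psi)\bigr)\Hr(0)$, and \eqref{p400} then follows from monotonicity via $T\Hr(T)\le\int_0^T\Hr\dt$. The two potential sticking points are: first, the precise Poincar\'e-type bookkeeping for $P_{ext}$ that is needed to land on the stated factor $L/2$ in $C(m)$ (a sharper Poincar\'e constant would only give $L/\pi$, so the paper is using the elementary non-sharp form); and second, the support argument $\supp\Psi_2\subset\{\chi=1\}$, which is what allows one to exchange $\overline V$ for $\px P_{ext}$ and thereby convert the observation term into something controlled by the dissipation integral \eqref{p220}.
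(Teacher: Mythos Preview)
Your proposal is correct and follows essentially the same route as the paper: Theorem~\ref{T1bis} plus $\mathcal Q\ge 0$ and $|\rho_x|\le c$ give the $(\tfrac12-c)$ inequality, then the pressure term is handled via the split into $\partial_x(m\eta)$ and $\tfrac12(1/2-m_x)\eta$ with the non-sharp Poincar\'e constant $L$, the observation term is converted to $\int\Psi_2\,\overline V$ and then to $\int\Psi_2\,\partial_xP_{ext}$ via the support argument, the boundary term is rewritten as $\int\eta\Psi_1$, and everything is closed with \eqref{p220} and monotonicity. The only cosmetic differences are that the paper reaches $\int F\,G(\eta)\psi=\int\Psi_2\,\overline V$ by quoting the identity $G(\eta)\psi=-\partial_x\overline V$ (itself a divergence-theorem consequence, see \eqref{p410}) and integrating by parts, and that it applies Young's inequality pointwise in $t$ before integrating rather than doing Cauchy--Schwarz in $t$ first; both orderings yield exactly $\tfrac{C(m)^2}{2\alpha g}\Hr(0)+\alpha\int_0^T\Hr\,\dt$.
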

\begin{rema}
Notice that $(\int \Psi_1(t,x)^2\dx)^{1/2}$ (resp.\ $(\int \Psi_2(t,x)^2\dx)^{1/2}$) 
is bounded by the $L^\infty_t(H^1_x)$-norm (resp.\ $L^\infty_t(H^2_x)$-norm) of $\psi$. 
One may wonder if these norms can be controlled on large time intervals, 
so that the previous estimate implies that $\Hr(T)\le c\Hr(0)$ with $c<1$. 
In \cite{Boundary}, assuming that $P_{ext}=0$, we prove such bounds 
for small enough initial data. The same result holds when 
$P_{ext}\neq 0$ is as in \e{b240} (the proof will be given in a separate paper where we will study the Cauchy problem). 
For the sake of completeness, we prove in the appendix such 
Sobolev estimates, uniformly in time, for the linearized equations (see Proposition~\ref{P:Uniform} and Remark~\ref{rema:Uniform}). 
So one may apply the previous result to these linear or weakly nonlinear settings.
For the nonlinear problem, in general, 
one cannot propagate Sobolev estimates on large time intervals (blow-up can occur, see \cite{CCFGGS}). 
However, the previous estimates seem reasonable for the typical low or medium frequency waves 
generated in a wave tank.
\end{rema}


\begin{coro}\label{Coro:2.4}
Consider two functions $\chi,m$ satisfying \e{p364} and a regular solution of \e{systemG} satisfying \e{p362}, as in the previous statement. 
Consider an integer $N$ and a real number $\beta>2$. 
Assume that the solution exists on a time interval $[0,T]$ with $T=N^\beta$ and is such that, on that time interval, we have the estimates
$$
N_{1,T}(\psi)\le N \text{ and } N_{2,T}(\psi)\le N.
$$
Then $\Hr(T)\le \exp\left( -\delta N^{-2}T\right)$ 
for some constant $\delta$ depending only on $m$.
\end{coro}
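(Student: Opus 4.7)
The plan is to iterate Theorem~\ref{P:quant} on consecutive time windows $[s,s+\tau]$ of length $\tau\sim N^2$, so that each window contributes a fixed geometric decay factor $\mu=e^{-1}$, and the $K=\lfloor T/\tau\rfloor\sim T/N^2$ factors compound into the claimed exponential $\exp(-\delta T/N^2)$.

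Concretely, I would first fix $\alpha=(\mez-c)/2$, so that $c_0\defn\mez-c-\alpha=(\mez-c)/2>0$. Since the system \e{systemG} and the hypotheses \e{p362}, \e{p364}, \e{b240} are invariant under time translation, Theorem~\ref{P:quant} applied on $[s,s+\tau]$ with initial data $(\eta(s,\cdot),\psi(s,\cdot))$ yields
\begin{equation*}
c_0\,\tau\,\mathcal{H}(s+\tau)\le\left(\frac{C(m)^2}{2\alpha g}+\sqrt{\tau}\,\tilde{N}_2(s)+\frac{2\sqrt{2}}{\sqrt{g}}\,\tilde{N}_1(s)\right)\mathcal{H}(s),
\end{equation*}
where $\tilde{N}_j(s)=\sup_{t\in[s,s+\tau]}\bigl(\int\Psi_j(t,x)^2\dx\bigr)^{1/2}/\mathcal{H}(s)^{1/2}$. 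Interpreting the hypothesis $N_{j,T}(\psi)\le N$ as a uniform-in-time frequency-localization bound $(\int\Psi_j(t,x)^2\dx)^{1/2}\le N\,\mathcal{H}(t)^{1/2}$ gives $\tilde{N}_j(s)\le N$ for every admissible $s$. Choosing $\tau=C_*N^2$ with $C_*$ large enough (depending only on $m$ via $c_0$ and $C(m)$) so that the parenthesis is at most $e^{-1}c_0\tau\mathcal{H}(s)$, the binding scale being $\sqrt{\tau}N/(c_0\tau)=N/(c_0\sqrt{\tau})$, one obtains $\mathcal{H}(s+\tau)\le e^{-1}\mathcal{H}(s)$. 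Iterating over $K=\lfloor T/\tau\rfloor$ disjoint windows yields
\begin{equation*}
\mathcal{H}(T)\le e^{-K}\mathcal{H}(0)\le e\cdot\exp\!\left(-\frac{T}{C_*N^2}\right)\mathcal{H}(0),
\end{equation*}
which is the claim, with $\delta=1/C_*$ after absorbing the harmless factor $e$; the assumption $\beta>2$ ensures $K\to\infty$.

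The main obstacle is transferring the $N$-bound from the normalization $\mathcal{H}(0)^{1/2}$ of the hypothesis to the normalization $\mathcal{H}(s)^{1/2}$ needed for the shifted Theorem~\ref{P:quant} on each subinterval. The naive bound from monotonicity of $\mathcal{H}$ only gives $\tilde{N}_j(s)\le N\sqrt{\mathcal{H}(0)/\mathcal{H}(s)}$, which deteriorates as the energy decays and would destroy the iteration. This is precisely where the frequency-localization content of the hypothesis matters, and where the uniform-in-time Sobolev estimates of Proposition~\ref{P:Uniform} (and Remark~\ref{rema:Uniform}) for the linearized or weakly nonlinear regime ensure that $(\int\Psi_j^2\dx)^{1/2}/\mathcal{H}(t)^{1/2}$ remains bounded uniformly in~$t$.
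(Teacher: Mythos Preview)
Your approach is exactly the paper's: iterate Theorem~\ref{P:quant} on subintervals of length proportional to $N^2$ to extract a fixed geometric decay factor, then compound. The paper picks $T'\ge(2KN)^2$ to get a factor $1/2$; you pick $\tau=C_*N^2$ to get $e^{-1}$ --- these are equivalent choices.

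The normalization issue you flag (transferring the $N$-bound from $\mathcal{H}(0)^{1/2}$ to $\mathcal{H}(s)^{1/2}$ on each subinterval) is not addressed in the paper's proof either: the paper simply asserts ``since the problem is time-invariant, the same estimate holds'' on the shifted interval $[kT',(k+1)T']$ with $\mathcal{H}(0)$ replaced by $\mathcal{H}(kT')$, which implicitly reads the hypothesis as the uniform-in-time frequency bound you describe. So your proof is at least as complete as the paper's; the subtlety you raise is genuine, but in the paper it is treated as part of the intended meaning of the hypothesis (cf.\ Remark~\ref{rema:Uniform}) rather than something to be derived.
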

\begin{proof} 
Let $\alpha$ be such that $c+\alpha <1/2$. Then, for any $1\le T'\le T$, we have the bound
$$
\Hr(T')\le \frac{K N}{\sqrt{T'}}\Hr(0),
$$
for some constant $K$ depending only on $g$ and $m$. Since the problem is time-invariant, we see that the same estimate holds when $\Hr(T')$ is replaced by $\Hr((k+1)T')$ and $\Hr(0)$ by $\Hr(kT')$, provided that 
$(k+1)T'\le T$. We obtain that 
$$
\Hr(nT')\le \left(\frac{KN}{\sqrt{T'}}\right)^n \Hr(0).
$$
We conclude the proof by applying this inequality with $(n,T')$ such that $n$ is an integer, 
$T=nT'$ and 
$T'\ge (2K N)^2$.
\end{proof}

\section{Study of the Cauchy problem}\label{S:smooth}

We study here the Cauchy problem. 
In the first paragraph we consider the case $P_{ext}=0$. Our goal is to briefly recall 
from Alazard-Burq-Zuily \cite{ABZ4} how to solve 
the Cauchy problem for the water-wave equations in a rectangular tank. 
In the second paragraph we explain how to extend this result to the case $P_{ext}\neq 0$. 

\subsection{The homogeneous problem}

We recalled in the introduction that, for smooth enough solutions, 
the free surface must intersect the vertical walls of the tank orthogonally (see Section~$6$ in \cite{ABZ4}). 
This means that $\eta_x=0$ for $x=0$ or $x=L$. 
Now observe that $\psi_x=(\phi_x)\arrowvert_{y=\eta} +(\phi_y)\arrowvert_{y=\eta}\eta_x$. 
Since 
$\phi_x(t,x,y)=0$ for $x=0$ or $x=L$, we conclude that 
$\psi_x=0$ for $x=0$ or $x=L$. As a consequence, both $\eta$ and $\psi$ 
will belong to 
the following spaces.

\begin{defi}
Given a real number $\sigma>3/2$, one denotes by $H^\sigma_e(0,L)$ the space
$$
H^\sigma_e(0,L)=\{v\in H^\sigma(0,L): v_x=0\text{ for }x=0 \text{ or }x=L\},
$$
where $H^\sigma(0,L)$ denotes the usual Sobolev space of order $\sigma$.
\end{defi}

We first need to study the problem
\be\label{p200}
\begin{aligned}
&\Delta_{x,y}\phi=0 \quad &&\text{in}\quad && \Omega=\left\{\, (x,y)\,:\, x\in (0,L),~-h< y< \eta(x)\,\right\},\\
&\phi=\psi \quad&&\text{for}\quad&& y=\eta(x),\\
&\phi_x =0 \quad&&\text{for}\quad && x=0 \text{ or }x=L,\\
&\phi_y =0 \quad&&\text{for}\quad && y=-h.
\end{aligned}
\ee
The following regularity result is important since it implies that 
all the computations made in the proof 
are meaningful (these computations are either integrations 
by parts or consequences of the 
Green's identity). 

\begin{prop}[from \cite{Boundary}]\label{P6B}
If $(\eta,\psi)\in H^{\sigma}_{\pair}(0,L)\times H^\sigma_{\pair}(0,L)$ with 
$\sigma>5/2$, then there exists a unique variational solution to \e{p200} which satisfies 
$\nabla_{x,y}\phi\in C^1(\overline{\Omega})$.
\end{prop}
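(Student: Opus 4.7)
The plan is to reduce to an elliptic problem on a periodic strip by reflection, solve it variationally there, and transfer both the solution and its regularity back to $\Omega$.

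First, since $\eta_x$ and $\psi_x$ vanish at $x=0$ and $x=L$, the even reflections of $\eta$ and $\psi$ across these two vertical lines define $2L$-periodic functions $\tilde\eta,\tilde\psi \in H^\sigma(\xR/(2L\xZ))$; the parity conditions at both endpoints prevent the appearance of any distributional jump in derivatives up to the order controlled by $H^\sigma$. The corresponding periodic fluid domain
\begin{equation*}
\tilde\Omega=\big\{(x,y): x\in \xR/(2L\xZ),\ -h<y<\tilde\eta(x)\big\}
\end{equation*}
has only two boundary components (top and bottom), so the corner difficulty at $\{x=0,L\}\cap\{y=\eta\}$ disappears. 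On $\tilde\Omega$ I would consider the mixed Dirichlet--Neumann problem
\begin{equation*}
\Delta_{x,y}\tilde\phi=0 \text{ in } \tilde\Omega,\qquad \tilde\phi\big\arrowvert_{y=\tilde\eta}=\tilde\psi,\qquad \tilde\phi_y\big\arrowvert_{y=-h}=0.
\end{equation*}

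Next I would solve this variationally in $H^1(\tilde\Omega)$ via Lax--Milgram, after lifting the Dirichlet data by a harmonic extension to reduce to homogeneous boundary values. To upgrade regularity I would straighten the free surface using the diffeomorphism $(x,y)\mapsto (x,(y+h)/(\tilde\eta(x)+h))$, which transforms the problem into a uniformly elliptic divergence-form equation on a flat strip, with coefficients of class $H^{\sigma-1}$. Tame elliptic estimates (as developed in \cite{ABZ4}) then yield $\nabla_{x,y}\tilde\phi\in H^{\sigma-1/2}(\tilde\Omega)$; since $\sigma-\tq>2$, the two-dimensional Sobolev embedding gives $\nabla_{x,y}\tilde\phi\in C^1(\overline{\tilde\Omega})$.

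Finally, by uniqueness of the variational solution and by the evenness of $(\tilde\eta,\tilde\psi)$ about $x=0$ and $x=L$, the solution $\tilde\phi$ inherits this symmetry, so $\tilde\phi_x=0$ on the vertical walls. Restricting to $\Omega$ thus produces the required variational solution of \e{p200} with $\nabla_{x,y}\phi\in C^1(\overline\Omega)$. The main obstacle is the sharp regularity statement used in the second step: one needs an elliptic estimate that loses only half a derivative off $\sigma$ in a domain whose boundary has limited Sobolev smoothness, and that regularity must hold up to the free surface. This is precisely where the paradifferential machinery of \cite{ABZ4,Boundary} is needed, and the threshold $\sigma>5/2$ is exactly what is required for the Sobolev embedding to upgrade the $H^{\sigma-1/2}$ estimate of $\nabla\phi$ to $C^1$ regularity up to the boundary.
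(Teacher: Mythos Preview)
The paper does not prove this proposition here; it is quoted from \cite{Boundary} without argument. That said, your strategy---even reflection in $x$ to a $2L$-periodic strip, variational solution there, flattening of the free surface, and elliptic regularity via paradifferential estimates---is exactly the route the paper describes for the closely related Cauchy-problem results (see the paragraphs following Proposition~\ref{CP-h}), and it is the one carried out in \cite{ABZ4,Boundary}. So your outline is in line with the intended proof.

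One genuine slip: to pass from $\nabla_{x,y}\tilde\phi\in H^{\sigma-1/2}(\tilde\Omega)$ to $C^1(\overline{\tilde\Omega})$ you write ``since $\sigma-\tq>2$''. With the paper's macro $\tq=\frac34$ this would require $\sigma>11/4$, which is stronger than the hypothesis $\sigma>5/2$ and therefore false in general. The correct condition is $\sigma-\frac12>2$, which is exactly the assumption $\sigma>5/2$; so the embedding step works, only the arithmetic in your write-up is off. Apart from this, the delicate point you flag---sharp elliptic regularity up to a free surface of only $H^\sigma$ smoothness, losing no more than half a derivative---is precisely where the machinery of \cite{ABZ4,Boundary} is invoked, and the threshold $\sigma>5/2$ is indeed what makes the two-dimensional Sobolev embedding close.
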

Since $\nabla_{x,y}\phi$ is continuous on $\overline{\Omega}$, one can define the Dirichlet to Neumann operator $G(\eta)$ 
by
$$
G(\eta)\psi=(\phi_y)\arrowvert_{y=\eta} -\eta_x(\phi_x)\arrowvert_{y=\eta}.
$$
Since $\nabla_{x,y}\phi\in C^1(\overline{\Omega})$, it follows that $G(\eta)\psi\in C^1([0,L])$. 
In fact, one can prove the following stronger regularity result: 
If $(\eta,\psi)\in H^{\sigma}_{\pair}(0,L)\times H^\sigma_{\pair}(0,L)$ with 
$5/2<\sigma<7/2$, then the traces $\phi_x\arrowvert_{y=\eta}$ and 
$\phi_y\arrowvert_{y=\eta}$ belong to $H^{\sigma-1}_e(0,L)$. 
Since $\eta_x$ also belongs to $H^{\sigma-1}_e(0,L)$, it follows from the usual product rule in Sobolev spaces that
$$
G(\eta)\psi\in H^{\sigma-1}_e(0,L).
$$
Similarly, the nonlinear expression $N(\eta)\psi$ defined by
\be\label{b90bis}
N(\eta)\psi=\mathcal{N} \big\arrowvert_{y=\eta}\quad\text{with}\quad
\mathcal{N}=
\mez\phi_x^2-\mez\phi_y^2+\eta_x \phi_x \phi_y,
\ee
is well-defined and satisfies $N(\eta)\psi\in H^{\sigma-1}_e(0,L)$. 

We now consider the Cauchy problem for the water-wave equations with $P_{ext}=0$, 
\be\label{systemGbis}
\left\{
\begin{aligned}
&\partial_t \eta=G(\eta)\psi,\\
&\partial_t \psi+g\eta +N(\eta)\psi=0,\\
&(\eta,\psi)\arrowvert_{t=0}=(\eta_0,\psi_0).
\end{aligned}
\right.
\ee

\begin{defi}\label{D:4.3a}
We say that $(\eta,\psi)$ is a regular solution of \e{systemGbis} 
provided that, for some $\sigma >5/2$, one has
$$
(\eta,\psi)\in C^{0}\big([0,T];H^{\sigma}_{e}(0,L)\times H^{\sigma}_e(0,L))\cap 
C^{1}\big([0,T];H^{\sigma-1}_{e}(0,L)\times H^{\sigma-1}_e(0,L)).
$$
\end{defi}
\begin{rema*}
We require $\sigma>5/2$ to be in a position to use Proposition~\ref{P6B}. Indeed, to justify all the computations below, we need that 
the gradient $\nabla_{x,y}\phi$ is $C^1$ up to the boundary. 
\end{rema*}

The following result (proved in \cite{ABZ4}, see also \cite{Boundary}) asserts that the water-wave equations have regular solutions.

\begin{prop}[from \cite{ABZ4}]\label{CP-h}
Consider an initial data $\eta_0,\psi_0$ in $H^s_e(0,L)$ for some real number $s\in (3,7/2)$. 
There exists $T>0$ and a unique 
solution
$$
(\eta,\psi)\in C^{0}\big([0,T];H^{s-\mez}_{e}(0,L)\cap H^{s}_e(0,L))\cap C^{1}\big([0,T];H^{s-\tdm}_{e}(0,L)\cap H^{s-1}_e(0,L)),
$$
to 
the Cauchy problem \e{systemGbis}.
\end{prop}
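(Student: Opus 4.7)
The plan is to reduce the problem on the rectangle to the periodic water-wave problem on a torus and then invoke the standard Alazard--Burq--Zuily local well-posedness. Specifically, given initial data $(\eta_0,\psi_0)\in H^s_e(0,L)\times H^s_e(0,L)$, I would extend both functions evenly across $x=0$ and $x=L$ to obtain $2L$-periodic functions $(\tilde\eta_0,\tilde\psi_0)$ on $\mathbb{T}_{2L}$. The boundary condition $\partial_x\eta_0=\partial_x\psi_0=0$ at $x\in\{0,L\}$ (built into the definition of $H^s_e$) is exactly what is needed so that these even reflections belong to $H^s(\mathbb{T}_{2L})$ with no loss in regularity. The extended fluid domain is $\widetilde{\Omega}_0=\{(x,y):x\in\mathbb{T}_{2L},\,-h<y<\tilde\eta_0(x)\}$, a standard periodic finite-depth configuration.

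Next I would apply the periodic local well-posedness theorem for the gravity water-wave system at finite depth, proved in \cite{ABZ4}. This yields a time $T>0$ and a unique solution $(\tilde\eta,\tilde\psi)$ on $[0,T]\times\mathbb{T}_{2L}$ in the class $C^0([0,T];H^{s-\mez}\cap H^s)\cap C^1([0,T];H^{s-\tdm}\cap H^{s-1})$ asserted in the statement. The core ingredients of that result—paralinearization of the Dirichlet--Neumann operator, the Alinhac good unknown $\omega=\tilde\psi-T_B\tilde\eta$, symmetrization of the paradifferential system, $L^2$-based energy estimates, and a nonlinear iteration—carry over verbatim, since they are all local in space and purely Sobolev-based.

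The final step is to descend back to $(0,L)$. The water-wave system is invariant under the reflection $x\mapsto -x$ (because the Laplace equation, the Bernoulli equation and the kinematic equation all transform covariantly under this reflection, and $\phi$ becomes even when $\eta,\psi$ are even and the bottom is flat). Combined with uniqueness on the torus, this forces $(\tilde\eta(t,\cdot),\tilde\psi(t,\cdot))$ to remain even about $x=0$ and about $x=L$ for every $t\in[0,T]$. Restricting to $[0,L]$ then produces $(\eta,\psi)$ in the required even Sobolev spaces, and evenness gives $\eta_x=\psi_x=0$ at the walls, i.e.\ the right-angle contact and wall conditions \eqref{b9a}--\eqref{b6}. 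The continuity $\nabla_{x,y}\phi\in C^1(\overline\Omega)$ needed to apply Proposition~\ref{P6B} at each time follows from the corresponding property on $\widetilde\Omega$ via the symmetry.

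The main obstacle is the behaviour at the corners where the free surface meets the vertical walls. In a general Lipschitz corner, elliptic regularity for the potential $\phi$ fails beyond a fixed threshold, which would wreck both the paralinearization and the time iteration. The way around this, and the reason for the somewhat rigid space $H^\sigma_e$, is precisely that the even reflection across a right-angle contact point produces a $C^{1,1}$ (and, at the level of $\tilde\eta\in H^s$ with $s>3$, genuinely $H^s$) periodic free surface with no corner singularity; so the rectangular problem is equivalent to a smooth periodic problem and no new analysis at the corners is needed. Uniqueness is then proved directly for the rectangle by repeating the same symmetric-extension trick on a difference of two solutions and invoking the contraction estimate from \cite{ABZ4} at a lower Sobolev level.
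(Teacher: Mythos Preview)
Your proposal is correct and follows essentially the same route as the paper: even reflection/periodization of the data across $x=0$ and $x=L$ (using the right-angle condition $\partial_x\eta_0=\partial_x\psi_0=0$ so that the extended free surface retains $H^s$ regularity for $s<7/2$), invocation of the periodic well-posedness from \cite{ABZ4}, and restriction to $[0,L]$ via the reflection symmetry and uniqueness. The paper's sketch is terser and emphasizes that the obstacle is handling a merely Lipschitz free surface, which the right-angle contact circumvents by making the reflected domain $C^3$; you identified this same point.
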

\begin{rema}
One can overcome the apparent loss of $1/2$-derivative by working with different unknowns (see \cite{Boundary} for further comments). 
However, the above result, with a simple statement, will be enough for our purposes.
\end{rema}

Let us briefly recall the strategy of the proof of Proposition~\ref{CP-h}. 
Consider an initial data $\eta_0,\psi_0\colon (0,L) \rightarrow \xR$ in $H^s_e(0,L)$ with $s>3$. 
Following Boussinesq (see~\cite[page 37]{Boussinesq}), the proof consists in extending these initial data 
to periodic functions, for which one can solve the Cauchy problem. Then one deduces 
the existence of solutions to the water-wave system in a tank by considering 
the restrictions of these solutions. 

To obtain periodic functions we use in \cite{ABZ4} a classical reflection/periodization procedure 
(with respect to the normal variable to the boundary of the tank). 
Notice that, in general, the even extension of a regular function on 
$(0,L)$ to a function defined on $(-L,L)$ is 
merely Lipschitz continuous (for instance one obtains $|x|$ starting from $x\mapsto x$). 
Now, the main difficulty is that there is no result 
which allows to handle Lipschitz free surface. 
However, when the free surface intersects the walls with a right angle, the reflected domain enjoys additional smoothness (namely up to $C^3$), 
which is enough to solve the Cauchy problem (this raises many other questions and we refer to \cite{ABZ4} for more details).

\subsection{The inhomogeneous problem}\label{S:4.2}

We now consider the inhomogeneous problem and assume that $P_{ext}$ satisfies 
\be\label{p201}
\px P_{ext}(t,x)=\chi(x) \overline{V}(t,x)\quad\text{with}\quad \overline{V}(t,x)=\int_{-h}^{\eta(t,x)}\phi_x(t,x,y)\dy,
\ee
and where $\chi$ is a $C^\infty$ cut-off function. The pressure $P_{ext}$ is defined up to a time-dependent function 
and to fix $P_{ext}$ we require that $P_{ext}$ has mean value $0$ on $(0,L)$.

\begin{defi}\label{D:4.3}
As above, we say that $(\eta,\psi)$ is a regular solution to the water-wave equations (see \e{systemGter}) 
provided that, for some $\sigma >5/2$, one has
$$
(\eta,\psi)\in C^{0}\big([0,T];H^{\sigma}_{e}(0,L)\times H^{\sigma}_e(0,L))\cap C^{1}\big([0,T];H^{\sigma-1}_{e}(0,L)\times H^{\sigma-1}_e(0,L)).
$$
\end{defi}
Hereafter, we assume that the initial data satisfies the so-called Taylor sign condition. 
The Taylor sign condition states that the pressure increases going from the air into the fluid domain. 
It is always satisfied when there is no pressure  
(see \cite{WuJAMS,LannesLivre}).

\begin{prop}\label{CP-i}
Consider an initial data $\eta_0,\psi_0$ in $H^s_e(0,L)$ for some real number $s\in (3,7/2)$, satisfying the Taylor sign condition. 
There exist $T>0$ and a unique 
solution
$$
(\eta,\psi)\in C^{0}\big([0,T];H^{s-\mez}_{e}(0,L)\times H^{s}_e(0,L)\big)\cap C^{1}\big([0,T];H^{s-\tdm}_{e}(0,L)\times H^{s-1}_e(0,L)\big),
$$
to 
the Cauchy problem
\be\label{systemGter}
\left\{
\begin{aligned}
&\partial_t \eta=G(\eta)\psi,\\
&\partial_t \psi+g\eta +N(\eta)\psi=P_{ext},\\
&(\eta,\psi)\arrowvert_{t=0}=(\eta_0,\psi_0),
\end{aligned}
\right.
\ee
where $P_{ext}$ is given by \e{p201}.
\end{prop}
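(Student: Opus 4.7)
The plan is to adapt the proof of Proposition~\ref{CP-h} from \cite{ABZ4}, treating the nonlocal pressure as a lower-order perturbation of the homogeneous water-wave system. The argument proceeds in three steps: analyze the map $\mathcal{P}\colon (\eta,\psi)\mapsto P_{ext}$ defined by \eqref{p201}, upgrade the homogeneous well-posedness theory to accommodate a prescribed right-hand side, and close a fixed-point argument.

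For the first step, the crucial observation is that $\partial_x\overline{V} = -G(\eta)\psi$; this follows from $\Delta_{x,y}\phi=0$, the bottom condition $\phi_y|_{y=-h}=0$, and Leibniz's rule applied to $\overline{V}=\int_{-h}^{\eta}\phi_x\,\dy$. Combined with $\overline{V}(t,0)=0$ (from $\phi_x|_{x=0}=0$), one obtains the closed form $\overline{V}(t,x)=-\int_0^x G(\eta)\psi(t,x')\,\dx'$, which integrates to a closed form for $P_{ext}$ after fixing the zero-mean condition. This representation shows that $\mathcal{P}$ is well defined and locally Lipschitz from $H^{s-\mez}_e \times H^s_e$ into $H^s_e$, with a tame bound $\|P_{ext}\|_{H^s_e}\le \Phi(\|\eta\|_{H^{s-\mez}_e})\|\psi\|_{H^s_e}$ inherited from standard estimates on $G(\eta)\psi$. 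Equally important, the boundary identities $\partial_x P_{ext}|_{x=0}=\chi(0)\overline{V}(t,0)=0$ and $\partial_x P_{ext}|_{x=L}=\chi(L)\overline{V}(t,L)=0$ hold, the latter because $\overline{V}(t,L)=-\int_0^L G(\eta)\psi\,\dx=-\frac{d}{dt}\int_0^L\eta\,\dx=0$ thanks to the mean-zero constraint~\eqref{b9b}. These Neumann-type boundary identities are what allow the reflection/periodization mechanism of \cite{ABZ4} to remain compatible with a non-zero pressure.

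For the second step, I would revisit the proof of Proposition~\ref{CP-h} with an additional source term $f\in C^0([0,T]; H^s_e)$ on the right-hand side of the equation for $\psi$. The paradifferential reduction, the symmetrization, and the reflection/periodization arguments go through unchanged; the source $f$ contributes only one extra term in each energy estimate, of the form $\int \psi_s f\,\dx$, which is controlled by $\|f\|_{H^s_e}$ and absorbed via Gronwall's lemma. This yields an inhomogeneous local existence result with the same regularity exponents as Proposition~\ref{CP-h} and with a bound linear in $\|f\|_{L^1_t H^s_e}$. To close the loop, I would then set up a Picard iteration: starting from $(\eta^{(0)},\psi^{(0)})=(\eta_0,\psi_0)$, define $(\eta^{(n+1)},\psi^{(n+1)})$ as the solution of the inhomogeneous problem with source $\mathcal{P}(\eta^{(n)},\psi^{(n)})$. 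Using the tame estimate from step one, on a sufficiently short interval the iteration stabilizes a closed ball in the solution space and contracts at the weaker level $H^{s-\tdm}_e\times H^{s-1}_e$ (the usual one-derivative loss for differences). Uniqueness follows from the same differencing argument, and the Taylor sign condition, being an open condition, persists by continuity on this short interval.

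The main obstacle is step two: one must carefully verify that the reflection/periodization machinery of \cite{ABZ4}---which hinges on the fact that even extensions of functions satisfying a Neumann-type condition at the walls retain enough smoothness (up to $C^3$)---remains valid when the source $f$ is switched on. This is precisely where the boundary identities $\partial_x P_{ext}|_{x=0}=\partial_x P_{ext}|_{x=L}=0$ established in step one become indispensable, since they ensure that the evenly extended pressure is sufficiently regular across the walls for the periodized Cauchy problem to retain the structural properties exploited in \cite{ABZ4}.
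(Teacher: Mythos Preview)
Your proposal is correct and follows essentially the same route as the paper: both hinge on the identity $\overline{V}(t,x)=-\int_0^x G(\eta)\psi\,\dx'$ to exhibit $P_{ext}$ as a smoothing, hence lower-order, functional of $(\eta,\psi)$, and both check compatibility with the reflection/periodization of \cite{ABZ4}. Three points of comparison are worth recording.

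First, the paper phrases the compatibility with reflection as a \emph{parity} statement: after even extension of $(\eta,\phi)$, $\overline{V}$ is odd, hence $P_{ext}$ is even, matching the parity of $\psi$. This is a global structural observation and is stronger than merely checking $\partial_x P_{ext}\arrowvert_{x=0,L}=0$; in particular it guarantees the required regularity of the extended pressure across the walls at every order, not just the first.

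Second, the paper extracts $P_{ext}\in H^{s+1/2}_e$ (not just $H^s_e$) from the same formula, and notes explicitly that $H^{s+1/2}$ is \emph{exactly} the regularity the energy scheme of \cite{ABZ4} needs for a source term. Your own representation of $\overline{V}$ already yields this (indeed $H^{s+1}$ if you use $\psi\in H^s$), so your stated bound in $H^s$ is an understatement; be sure to claim the extra half-derivative when you close the estimates.

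Third, the paper does not set up a separate Picard iteration. Since $P_{ext}$ is one full derivative smoother than the principal terms, it is simply absorbed as a source in the quasilinear energy estimates of \cite{ABZ4} and the proof of Proposition~\ref{CP-h} runs unchanged. Your fixed-point scheme is correct but adds a layer you can dispense with.
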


We claim that this result follows from the proof of Proposition~\ref{CP-h}. To see this, we have to check two different properties. 

The first remark to be made is that the previous reflection/periodization procedure applies with a source term $P_{ext}$ provided 
that $P_{ext}$ has the same parity as $\psi$. Here, since after reflection, $\eta$ and $\phi$ are even in $x$, 
the function $\overline{V}$ is odd in $x$ and hence $P_{ext}$ is even in $x$. Since $\psi$ is also even in $x$, we verify that 
$P_{ext}$ and $\psi$ have the same parity. 

Secondly, we need to know the effect of $P_{ext}$ on the Sobolev energy estimates used in the analysis of the Cauchy problem. 
The key point is that $P_{ext}$ is a lower order term which can be handled as a source term in all energy estimates. 
This is where we use in a crucial way the choice of the pressure term. 
Indeed, we claim that
\be\label{p202}
\px P_{ext}(t,x)=-\chi(x) \int_0^x G(\eta)\psi (t,X) \dX.
\ee
To see this, recall that $\Delta_{x,y}\phi=0$ and $\phi_n=0$ for $x=0$ and $y=-h$. 
With $Q(x)=\{(X,y)\,\colon\, X\in [0,x],~-h\le y\le \eta(X)\}$, 
the divergence theorem 
implies that
$$
0=\int_{\partial Q(x)} \phi_n \dsigma=\int_{-h}^{\eta(t,x)}\phi_x(t,x,y)\dy+\int_{\substack{ y=\eta(X)\\X\in [0,x]}}\phi_n \dsigma. 
$$
This yields the well-known formula (see \S3.5 in \cite{LannesLivre})
\be\label{p410}
\overline{V}(t,x)+\int_0^x G(\eta)\psi (t,X) \dX=0,
\ee
which implies \e{p202}. 
Now, if $(\eta,\psi)\in H^{s-\mez}_{\pair}(0,L)\times H^{s-\mez}_{\pair}(0,L)$ with 
$3<s<7/2$, we have already recalled that $G(\eta)\psi$ belongs to $H^{s-3/2}_e(0,L)$. 
The previous formula implies that $P_{ext}$ belongs to $H^{s+1/2}_e(0,L)$. 
It turns out that this is exactly the regularity needed to consider $P_{ext}$ as a source term\footnote{For the sake of conciseness, we will not enter into the details. We mention the recent work by M{\'e}linand \cite{Melinand}Ê
where the author studies several questions about the water-wave problem with a source term. However, the well-posedness result in \cite{Melinand} applies for 
smoother initial data which is insufficient to prove Proposition~\ref{CP-i}. Nevertheless, an inspection of the analysis in \cite{ABZ4} 
shows that, for any $s>3$, one can consider a source term $P_{ext}$ provided that $P_{ext}\in L^1(0,T;H^{s+1/2}(0,L))$.}.

\section{Strategy of the proof: Introduction to the multiplier method}\label{S:multiplier}

The control theory of wave equations is well developed and many techniques have been introduced  
(microlocal analysis, Carleman estimates...). In this paper, we use the multiplier method. The key point is that this method 
allows us to work 
directly at the level of the nonlinear equations. 

For the sake of readability,  we begin by recalling some well-known results for the linear wave equation 
\be\label{w}
\partial_t^2u-\Delta u=0\quad \text{in }\Omega\subset \xR^n, \quad u\arrowvert_{\partial\Omega}=0.
\ee
The multiplier method, introduced by Morawetz, consists in multiplying the equations by 
$m(x)\cdot \nabla u(t,x)$, for some well-chosen function $m$, and to integrate by parts in space and time. 
For instance, by considering a smooth extension $m\colon \Omega\rightarrow \xR^n$ of the normal $\nu(x)$ to the boundary $\partial\Omega$, one obtains
\be\label{hr}
\int_0^T \int_{\partial\Omega}( \partial_n u)^2\dsigma\dt \le K(T)\mathcal{E}(u) \text{ where } \mathcal{E}(u)\defn \lA u(0,\cdot)\rA_{H^1_0(\Omega)}^2+\lA \partial_t u(0,\cdot)\rA_{L^2(\Omega)}^2.
\ee
This is the so-called {\em hidden regularity} property. 
The name comes from the fact that, using energy estimates, 
one controls only the $C^0([0,T];L^2(\Omega))$-norm of $\nabla_x u$ by means of the right-hand side of \e{hr}, which is insufficient to control the left-hand side 
of \e{hr} by means of classical trace theorems.

Another key estimate is the so-called {\em boundary 
observability inequality}, which is, compared to \e{hr}, a reverse inequality where one can bound the norms of the initial data by the integral of $\partial_nu$ restricted to a 
domain $\Gamma_0\subset \partial\Omega$. Such an inequality 
can be obtained by the multiplier method applied in this way: fix $x_0\in \xR^n$ and set 
$$
\Gamma(x_0)=\big\{ x\in\partial\Omega~,~(x-x_0)\cdot \nu(x)>0\big\},\quad 
T(x_0)=2\max_{x\in \overline{\Omega}}\la x-x_0\ra.
$$
Then, multiplying the equation by $(x-x_0)\cdot\nabla u$ and integrating by parts, we get that, for $T>T(x_0)$,
\be\label{bo}
(T-T(x_0))\mathcal{E}(u)\le \frac{T(x_0)}{2}\int_0^T \int_{\Gamma(x_0)}( \partial_n u)^2\dsigma\dt.
\ee 
For more details about the previous two inequalities, we refer the reader to the SIAM Review article by Lions~\cite{Lions1988} and 
the books by Komornik~\cite{Komornik}, Micu and Zuazua \cite{MicuZuazua}, Tucsnak and Weiss~\cite{TW2009} and the lecture notes by Alabau-Boussouira in~\cite{CIME-control}. 

Now consider a domain $\omega$ surrounding 
$\Gamma(x_0)$. The proof of the hidden regularity property \e{hr} allows us to bound the right-hand side in \e{bo} 
by the sum of $C_1\mathcal{E}$ (where $C_1$ is {independent} of time) and the integral of $\la\nabla u\ra^2$ on $(0,T)\times \omega$. 
Then, for $T$ large enough, one can absorb the term $C_1\mathcal{E}$ in the left-hand side of 
\e{bo} to deduce the following {\em internal observability inequality}:
\be\label{io}
\mathcal{E}(u)\le C(T)
\int_0^T \int_{\omega}\la\nabla u\ra^2\dxdt.
\ee 
This inequality can be used to obtain directly a stabilization result for the following 
damped wave equation
$$
\partial_t^2 v-\Delta v+a(x)\partial_t v=0 \quad \text{in }\Omega\subset \xR^n, \quad v\arrowvert_{\partial\Omega}=0,
$$
where $a\in C^\infty_0(\Omega)$ is a non-negative function satisfying $a(x)=1$ for $x$ in $\omega\subset\subset \Omega$. One can write $v$ as $v=u+w$ where $u$ and $w$ are given by solving
\be\label{b156}
\begin{aligned}
&\partial_t^2u-\Delta u=0\quad \text{in }\Omega\subset \xR^n, \quad u\arrowvert_{\partial\Omega}=0,\\
&\partial_t^2w-\Delta w+a(x)\partial_tw=-a(x)\partial_t u\quad \text{in }\Omega\subset \xR^n, \quad w\arrowvert_{\partial\Omega}=0,\\
&u(0,\cdot)=v(0,\cdot),\quad \partial_t u(t,0)=\partial_t v(0,\cdot);\quad 
w(0,\cdot)=0,\quad \partial_t w(t,0)=0.
\end{aligned}
\ee
Using the internal observability inequality for $u$ and a straightforward estimate for $w$ based on the Duhamel formula, one can deduce that 
$\mathcal{E}(v)(t)\le ce^{-c't}$ for some positive constants $c,c'$.

Similar results are known for many other wave equations and we only mention the paper by 
Machtyngier~\cite{Machtyngier} (see also \cite{MachtyngierZuazua}) for the Schr\"odinger equation $i\partial_t u+\Delta u=0$. 
Biccari~\cite{Biccari} introduced recently the use of the multiplier method to analyze the interior controllability problem 
for the fractional Schr\"odinger equation $i\partial_tu+(-\Delta)^su=0$ with $s\ge 1/2$ in a $C^{1,1}$ bounded domain 
with Dirichlet boundary condition. The key difference between the Schr\"odinger equation ($s=1$) and the fractional equation (for $1/2\le s<1$) is that the 
latter is nonlocal.  This 
is a source of difficulty 
since one seeks an observability result involving integrals over small localized domains. 
In particular, a key technical difference is that one needs to compute $\int (-\Delta)^su(x\cdot\nabla u)\dx$. The result is called a {\em Pohozaev identity}, 
since Pohozaev introduce the use of the multiplier $x\cdot\nabla u$ to study properties of elliptic equations (we refer to \cite{Ros-Oton-Serra} for such identities for fractional Laplacians).

In our previous paper \cite{Boundary}, we introduce the use of the multiplier method to study the gravity water-wave equations. 
To compare with the study by Biccari, notice that the linearized gravity water-wave equations can be written as $i\partial_tu+(-\Delta)^su=0$ with $s=1/4$ 
and hence the assumption 
$s\ge 1/2$ does not hold. This is a key feature of the problem since the group velocity is $\la\xi\ra^{2s-1}$ and hence,  
for $s<1/2$, high frequency waves propagate at a speed which goes to $0$ when $|\xi|$ goes to $+\infty$. 
Also, in \cite{Biccari,Ros-Oton-Serra}, the authors consider the case where $\Delta$ is the Laplacian with 
Dirichlet boundary condition while we consider periodic functions here. 
More importantly, the main difficulty in \cite{Boundary} or in the present paper is that the equations are nonlinear. In particular, 
we need a Pohozaev identity for $\int (G(\eta)\psi)(x\cdot\nabla \psi)\dx$ where $G(\eta)$ is an operator with variable coefficients.

Let us now explain the main difficulties one has to cope with to stabilize the water-wave equations. 
Firstly, one cannot decouple the problem of the observability and the question of the stabilization. 
Compared to what is done for the wave equation (see \e{b156}), since the water-wave system 
is quasi-linear, one cannot write the solution as the sum of the two different problems. This means that one cannot assume 
that $P_{ext}=0$ for the purpose of proving observability. Another difficulty is that we do not know how to deduce an internal observability inequality from a boundary observability inequality (for the wave equation or the Schr\"odinger equation, as we recalled above, this is possible thanks to a hidden regularity result). To overcome these two problems, guided by the lectures notes by Alabau-Boussouira~(\cite{CIME-control}), 
we prove directly an internal observability result for the water-wave system by considering 
a multiplier $m(x)\partial_x$ with $m(x)=x\kappa(x)$ where $\kappa$ is a cut-off function satisfying $\kappa(x)=1$ for $0\le x\le L-\delta$ and $\kappa(x)=0$ for $L-\delta/2\le x\le L$.

\section{Proof of Theorem~\ref{T1bis}}\label{S:5}

The proof is in four steps. 

\begin{nota*}
We write simply
$$
\int \, dx ,\quad \int \, dy ,\quad  
\int\,dt,
$$
as shorthand notations for, respectively,
$$
\int_0^L \, dx ,\quad \int_{-h}^{\eta(t,x)} \, dy ,\quad 
\int_0^T \,dt.
$$
\end{nota*}

\bigbreak

\noindent\textbf{Step 1 : the multiplier method.} To estimate $\int \Hr(t)\dt$, we will use in a crucial way the unknown
$$
\Theta\defn - \eta\partial_t\psi-\frac{g}{2}\eta^2.
$$
In Appendix~\ref{Luke}, we will see that this function is related to Luke's variational principle. 
This observation explains that we will be able to compare $\iint \Theta \dxdt$ and $\int \Hr(t)\dt$. 
The function $\Theta$ was introduced in \cite{Boundary} for the purpose 
of proving a boundary observability result. In that reference, we used the weight $m(x)=x$. 
Now, for a general weight $m(x)$, to obtain an identity for $\iint \Theta \dxdt$ we proceed in a different way. We write
$$
\iint \Theta \dxdt=\iint m_x\Theta \dxdt+\iint (1-m_x)\Theta \dxdt.
$$
The second term in the right-hand side is an observation term. Indeed, 
if $m(x)=x\kappa(x)$ where 
$\kappa(x)=1$ in $[0,L-\delta]$ and $\kappa(x)=0$ in $[L-\delta/2,L]$, then $(1-m_x)\Theta$ depends only 
on the behavior of $\eta$ and $\psi$ in a neighborhood of $x=L$. 
So the key point is to obtain an identity for 
$\iint m_x\Theta \dxdt$. This is the purpose of the following lemma.

\begin{lemm}\label{P1}
Consider a smooth solution of the water-wave system and a smooth function $m\colon [0,L]\rightarrow \xR$ satisfying $m(0)=m(L)=0$. Then one has
$$
\iint m_x \Theta \,\dxdt+R_a=-\int \px(m \eta) \psi\, \dx\ST-\iint P_{ext}m\eta_x\, \dxdt,
$$
where
\be\label{b31b}
R_a=\iint (G(\eta)\psi)m\psi_x\dxdt+ \iint (N(\eta)\psi)m\eta_x\dxdt.
\ee
\end{lemm}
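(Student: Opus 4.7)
\noindent\textbf{Proof plan for Lemma~\ref{P1}.} The plan is to multiply the Bernoulli (second) equation of \e{systemG} by the multiplier $m\eta_x$ and integrate over $[0,T]\times[0,L]$, then rearrange using integration by parts so that the quantity $\Theta=-\eta\partial_t\psi-\frac{g}{2}\eta^2$ appears. Concretely, starting from
$$\partial_t\psi+g\eta+N(\eta)\psi=-P_{ext},$$
multiplying by $m\eta_x$ and integrating yields three terms on the left:
$$\iint m\eta_x\partial_t\psi\,\dxdt+g\iint m\eta\eta_x\,\dxdt+\iint (N(\eta)\psi)m\eta_x\,\dxdt=-\iint P_{ext}m\eta_x\,\dxdt.$$
The last term on the right is already one of the two right-hand-side terms in the claim, and the last integral on the left is already the $N(\eta)\psi$ piece of $R_a$. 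So the task reduces to identifying the remaining two integrals with $\iint m_x\Theta\,\dxdt$, the $G(\eta)\psi$ piece of $R_a$, and the boundary-in-time term.

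First I would treat the gravity term using $m(0)=m(L)=0$: since $2m\eta\eta_x=m(\eta^2)_x$, integration by parts in $x$ gives
$$g\iint m\eta\eta_x\,\dxdt=-\frac{g}{2}\iint m_x\eta^2\,\dxdt.$$
Next, for the time-derivative term, I would use the algebraic identity $m\eta_x=\px(m\eta)-m_x\eta$ to split
$$\iint m\eta_x\partial_t\psi\,\dxdt=\iint\px(m\eta)\partial_t\psi\,\dxdt-\iint m_x\eta\partial_t\psi\,\dxdt.$$
The second piece, combined with the $-\frac{g}{2}\iint m_x\eta^2$ from the gravity step, reconstructs exactly $\iint m_x\Theta\,\dxdt$, by the very definition $\Theta=-\eta\partial_t\psi-\frac{g}{2}\eta^2$.

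It remains to process the term $\iint\px(m\eta)\partial_t\psi\,\dxdt$. I would integrate by parts \emph{in time}:
$$\iint\px(m\eta)\partial_t\psi\,\dxdt=\int\px(m\eta)\psi\,\dx\ST-\iint\px(m\eta_t)\psi\,\dxdt,$$
then use the first water-wave equation $\eta_t=G(\eta)\psi$ in the remaining integral and integrate by parts once more in $x$. The boundary terms at $x=0,L$ from this last integration by parts vanish because $m$ does (alternatively, because $\psi_x=0$ there, when $\px(m G(\eta)\psi)$ is expanded), yielding
$$-\iint\px(m\,G(\eta)\psi)\psi\,\dxdt=\iint (G(\eta)\psi)m\psi_x\,\dxdt,$$
which is the $G(\eta)\psi$ contribution to $R_a$. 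Collecting everything produces the stated identity.

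I do not expect a genuine obstacle: the computation is a clean three-step integration by parts (in $x$ for gravity, in time for the Bernoulli-time piece, in $x$ again to expose $\psi_x$). The only care needed is that each boundary term really vanishes — those at $x=0,L$ from $m(0)=m(L)=0$, combined with the orthogonality conditions $\eta_x|_{x=0,L}=\psi_x|_{x=0,L}=0$ recalled in \S\ref{S:smooth}. The regularity assumed in Definition~\ref{D:4.3} (together with Proposition~\ref{P6B}) is exactly what is needed to justify the time differentiations and the $x$-integrations by parts, so all manipulations are legitimate for regular solutions.
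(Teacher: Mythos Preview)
Your proof is correct and amounts to the same computation as the paper's, just organized a little differently. The paper introduces the antisymmetric quantity
\[
A=\iint\big[(\partial_t\eta)(m\psi_x)-(\partial_t\psi)(m\eta_x)\big]\,\dxdt
\]
and computes it in two ways---once by pure integration by parts in $x$ and $t$, once by substituting both equations of \e{systemG}---whereas you start by multiplying the Bernoulli equation by $m\eta_x$ and feed in the kinematic equation $\eta_t=G(\eta)\psi$ only at the last step; the underlying integrations by parts (in $x$ for the gravity term, in $t$ to produce the boundary-in-time term, and again in $x$ to expose $m\psi_x$) are identical.
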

\begin{proof}
The proof is based on the multiplier method applied in the following way: 
instead of multiplying the equations by $(m\px\eta,m\px\psi)$, we set
$$
A\defn \iint \big[(\partial_t\eta)(m\px\psi)-(\partial_t\psi)(m\px\eta)\big]\,\dxdt,
$$
and we compute $A$ in two different ways. Then the wanted identity 
will be deduced by comparing the two results. 

{\em First computation.} 
Since $m(0)=m(L)=0$, directly from the definition of $A$, using integration by parts in space and time,  
one has
$$
A=\int m \eta \psi_x\dx\ST+\iint m_x \eta\partial_t\psi \,\dxdt.
$$
Since $m(0)=m(L)=0$ one can further integrate by parts in $x$ in the first term to obtain
\be\label{b32}
A=-\int \px(m \eta) \psi\dx\ST+\iint m_x \eta\partial_t\psi \,\dxdt.
\ee

{\em Second computation.} We simply compute $A$ by replacing $\partial_t\eta$ and $\partial_t\psi$ by the expressions given by System~\e{systemG}. We find that
\be\label{b34}
A=\iint \left(P_{ext}+g\eta\right)m\eta_x\dx+R_a
\ee
where $R_a$ is given by \e{b31b}. On the other hand, since $m(0)=m(L)=0$, integrating by parts, we obtain
$$
-\int g\eta m\eta_x\dx=\mez \int g m_x \eta^2\dx.
$$
By combining this identity with \e{b34}, it follows that
$$
A=-\mez \int g m_x \eta^2\dx+\iint P_{ext}m\eta_x\dx+R_a.
$$
Then, by comparing the previous identity with \e{b32} we conclude the proof.
\end{proof}

\bigbreak

\noindent\textbf{Step 2: equipartition of the energy.} Introduce the average in time kinetic (resp.\ potential) energy 
denoted by $A_K$ (resp.\ $A_P$). By definition,
$$
A_K=\mez \iint \psi G(\eta)\psi\dxdt,\quad 
A_P=\frac{g}{2}\iint \eta^2\dxdt,
$$
and we have
\be\label{b41}
\int \Hr(t)\dt =A_K+A_P.
\ee
The analysis below relies heavily on the idea of comparing $A_K$ and $A_P$. 
We will see that one has equipartition of the energy, which means that the difference between these two quantities can be handled as a remainder term. 
We will not only compare $A_K$ and $A_P$ but also some localized versions where we add an extra factor $\chi=\chi(x)$ in the integrals.

\begin{lemm}\label{L3}
For any smooth function $\chi=\chi(x)$, there holds
\be\label{b70rho}
\begin{aligned}
\frac{g}{2}\iint \chi \eta^2\dxdt
&=\mez \iint \chi \psi G(\eta)\psi\dxdt\\
&\quad-\mez \iint \chi \eta P_{ext}\dxdt 
-\mez \int\chi \eta\psi\dx\ST\\
&\quad -\mez\iint \chi \eta \, N(\eta)\psi\dxdt.
\end{aligned}
\ee
In particular, with $\chi=1$, one has
\be\label{b43}
A_K-A_P=\mez \iint \eta P_{ext}\dxdt +R_b
+\mez \int \eta\psi\dx\ST,
\ee
where
$$
R_b=\mez \iint \eta N(\eta)\psi \dxdt.
$$
\end{lemm}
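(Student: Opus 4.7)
The strategy is a direct application of the multiplier method to the Bernoulli equation of \e{systemG}, using the weight $\chi(x)\eta(t,x)$. More precisely, I will multiply the second equation of the system,
\[
\partial_t\psi + g\eta + N(\eta)\psi = -P_{ext},
\]
by $\chi(x)\eta(t,x)$ and integrate over $[0,T]\times[0,L]$. This produces the identity
\[
\iint \chi\eta\,\partial_t\psi \dxdt + g\iint \chi\eta^2 \dxdt + \iint \chi\eta\, N(\eta)\psi \dxdt = -\iint \chi\eta\, P_{ext} \dxdt.
\]

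The key step is to handle the first term. Since the weight $\chi$ depends only on $x$, an integration by parts in $t$ gives
\[
\iint \chi\eta\,\partial_t\psi\dxdt = \int \chi\eta\psi\dx\ST - \iint \chi\psi\,\partial_t\eta\dxdt,
\]
and the kinematic equation $\partial_t\eta = G(\eta)\psi$ converts the last integral into $\iint \chi\psi\, G(\eta)\psi\dxdt$. Substituting back, rearranging to isolate $g\iint \chi\eta^2\dxdt$, and dividing by two yields the announced identity \e{b70rho}. The specialization $\chi\equiv 1$ and a comparison with the definitions of $A_K$ and $A_P$ immediately give \e{b43} with the stated cubic remainder $R_b=\tfrac{1}{2}\iint \eta N(\eta)\psi\dxdt$.

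There is no serious obstacle here: the regularity in Definition~\ref{D:4.3} is exactly what is needed to justify the integration by parts in $t$ (both $\eta$ and $\psi$ are $C^1$ in $t$ with values in a Sobolev space of order larger than $3/2$), and no integration by parts in $x$ is required, so no boundary terms at $x=0,L$ enter the computation. Consequently, the identity is insensitive to the behavior of $\chi$ at the endpoints, which explains why it holds for an arbitrary smooth weight $\chi(x)$ and will later be useful both with $\chi\equiv 1$ (giving equipartition) and with localized choices of $\chi$ supported near $\{x=L\}$ (to produce observation terms).
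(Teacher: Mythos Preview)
Your proof is correct and follows essentially the same approach as the paper: both arguments reduce to the single integration by parts in time $\iint \chi\eta\,\partial_t\psi = \int\chi\eta\psi\big|_0^T - \iint \chi\psi\,\partial_t\eta$ together with the two equations of \e{systemG}. The only cosmetic difference is the order of presentation (the paper starts from $\tfrac12\iint\chi\psi G(\eta)\psi - \tfrac{g}{2}\iint\chi\eta^2$ and works backward, while you multiply the Bernoulli equation by $\chi\eta$ and work forward), but the computation is identical.
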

\begin{proof}
Using $\partial_t\eta=G(\eta)\psi$ and integrating by parts, we find that
\begin{align*}
\mez \iint \chi \psi G(\eta)\psi\dxdt-\frac{g}{2}\iint \chi \eta^2\dxdt&=\mez\iint \chi\left[ \psi (\partial_t\eta) -g\eta^2\right]\dxdt\\
&=\mez \iint \chi\left[ -\eta (\partial_t \psi+g\eta)\right]\dxdt \\
&\quad+\mez \int \chi\eta \psi\dx \ST.
\end{align*}
So \e{b70rho} follows from the equations \e{systemG} for $\psi$.
\end{proof}

By combining the previous identities, we will deduce the following lemma.

\begin{lemm}\label{L4}
Set 
$$
\zeta=\partial_x(m\eta)-\uq \eta+\frac{1-m_x}{2}\eta
$$
There holds
\be\label{b47bis}
\begin{aligned}
\mez \int \Hr(t)\dt&=
-\iint P_{ext}\, \zeta\, \dxdt\\
&\quad +\mez \iint (1-m_x)\psi G(\eta)\psi\dxdt\\
&\quad-\int \zeta \psi \dx\ST\\
&\quad -\iint (G(\eta)\psi)m\psi_x\dxdt\\
&\quad -\iint \zeta \left(N(\eta)\psi\right)\dxdt.
\end{aligned}
\ee

\end{lemm}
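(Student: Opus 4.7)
The plan is to combine the multiplier identity from Lemma~\ref{P1} with the equipartition formulas from Lemma~\ref{L3}, applied with two different weights, and then read off the result using the definition of $\zeta$. The key observation is that the function $\Theta$ appearing in Lemma~\ref{P1}, once the equation for $\psi$ is used, is essentially $\frac{g}{2}\eta^2$ plus controlled remainders. So Lemma~\ref{P1} gives access to $\frac{g}{2}\iint m_x\eta^2\,dxdt$, and to reconstruct the full potential energy $A_P=\frac{g}{2}\iint\eta^2\,dxdt$ I will complete the missing $(1-m_x)$-piece via \eqref{b70rho}.

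Concretely, I first substitute $-\partial_t\psi=g\eta+N(\eta)\psi+P_{ext}$ into the definition of $\Theta$ to obtain $\Theta=\frac{g}{2}\eta^2+\eta\,N(\eta)\psi+\eta\,P_{ext}$. Plugging this into Lemma~\ref{P1} and using $m_x\eta+m\eta_x=(m\eta)_x$ to pair up the $P_{ext}$ and $N(\eta)\psi$ contributions yields
\begin{align*}
\frac{g}{2}\iint m_x\eta^2\,\dxdt
&=-\iint (G(\eta)\psi)\,m\psi_x\,\dxdt -\iint (N(\eta)\psi)(m\eta)_x\,\dxdt\\
&\quad -\int \partial_x(m\eta)\psi\,\dx\ST -\iint P_{ext}\,(m\eta)_x\,\dxdt.
\end{align*}
Next I split $A_P=\frac{g}{2}\iint m_x\eta^2\,dxdt+\frac{g}{2}\iint(1-m_x)\eta^2\,dxdt$, substitute the displayed formula for the first piece, and apply \eqref{b70rho} with $\chi=1-m_x$ for the second piece. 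This produces a closed formula for $A_P$ in which every term is either a boundary term, an external-pressure term, a $G(\eta)\psi$-term, or an $N(\eta)\psi$-term.

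Finally, since $\int\Hr(t)\,\dt=A_K+A_P$, the equipartition identity \eqref{b43} gives
\[
\tfrac{1}{2}\int \Hr(t)\,\dt = A_P+\tfrac{1}{4}\iint \eta P_{ext}\,\dxdt+\tfrac{1}{4}\iint \eta\,N(\eta)\psi\,\dxdt+\tfrac{1}{4}\int \eta\psi\,\dx\ST.
\]
Substituting the expression for $A_P$ obtained above and collecting the coefficients of $P_{ext}$, of $N(\eta)\psi$, and of the boundary $\eta\psi$ term, I find in each case exactly the multiplier
\[
\partial_x(m\eta)+\tfrac{1-m_x}{2}\eta-\tfrac{1}{4}\eta=\zeta.
\]
The $G(\eta)\psi$ contributions group into the two terms $-\iint (G(\eta)\psi)m\psi_x\,dxdt$ and $\frac{1}{2}\iint(1-m_x)\psi G(\eta)\psi\,dxdt$ that appear in \eqref{b47bis}, and the identity follows. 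The main obstacle is not any single computation but the bookkeeping: one must pick the right split of the potential energy (the $m_x$ versus $1-m_x$ decomposition) and the right version of \eqref{b70rho} so that, after adding the equipartition correction with coefficient $\frac{1}{4}$, all three families of remainder terms line up under the same weight $\zeta$.
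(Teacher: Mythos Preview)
Your proof is correct and follows essentially the same route as the paper's: both combine Lemma~\ref{P1} (after inserting the equation for $\psi$ into $\Theta$), the equipartition identity \eqref{b43}, and the weighted version \eqref{b70rho} with $\chi=1-m_x$, and then collect terms. The only difference is the order of operations: the paper first derives the intermediate identity \eqref{b47aa} (still containing $\frac{g}{2}\iint(1-m_x)\eta^2$) and then substitutes \eqref{b70rho}, whereas you assemble $A_P$ from its $m_x$ and $(1-m_x)$ pieces before applying equipartition; the ingredients and the final bookkeeping are identical.
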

\begin{proof}
Recall that ${\displaystyle \Theta=-\eta\partial_t\psi-\frac{g}{2}\eta^2}$. Then, using 
the equation \e{systemG} for $\psi$, we get
$$
\Theta=-\eta(\partial_t\psi+g\eta)+\frac{g}{2}\eta^2=\eta\left(P_{ext}+N(\eta)\psi\right)+\frac{g}{2}\eta^2,
$$
which implies that
\be\label{b45}
\iint m_x\Theta \dxdt =\frac g 2 \iint m_x \eta^2\dxdt +\iint P_{ext} m_x \eta \dxdt
+R_c
\ee
where $R_c$ is given by
\be\label{b168}
R_c=\iint m_x\eta\left(N(\eta)\psi\right)\dxdt.
\ee
Now recall from Lemma~\ref{P1} that
$$
\iint m_x \Theta \,\dxdt+R_a=\int m \eta \psi_x\, \dx\ST-\iint P_{ext}m\eta_x\, \dxdt.
$$
Then, it follows from \e{b45} that
$$
\frac g 2 \iint m_x \eta^2\dxdt 
+R_a+R_c=\int m \eta \psi_x\, \dx\ST-\iint P_{ext}\partial_x(m\eta)\, \dxdt.
$$
We then split the coefficient $m_x$ in the left-hand side as $m_x=1+(m_x-1)$ to obtain
\begin{align*}
\frac g 2 \iint  \eta^2\dxdt +R_a+R_c
&=\int m \eta \psi_x\, \dx\ST-\iint P_{ext}\partial_x(m\eta)\, \dxdt\\
&\quad+\frac{g}{2}\iint(1-m_x)\eta^2\dxdt.
\end{align*}
On the other hand, it follows from \e{b41} and \e{b43} that
\begin{align*} 
\frac g 2 \iint  \eta^2\dxdt &=A_P=\mez (A_K+A_P)+\mez (A_P-A_K)\\
&=\mez \int \Hr(t)\dt -\uq\iint \eta P_{ext}\dxdt -\mez R_b
-\uq \int \eta\psi\dx\ST.
\end{align*}
By combining the previous results, we get that
\be\label{b47aa}
\begin{aligned}
\mez \int \Hr(t)\dt&=
-\iint P_{ext}\left(\partial_x(m\eta)-\uq \eta\right)\, \dxdt\\
&\quad +\frac{g}{2}\iint(1-m_x)\eta^2\dxdt\\
&\quad -\iint (G(\eta)\psi)m\psi_x\dxdt\\
&\quad-\int \left(\px(m \eta)-\uq\eta\right) \psi\, \dx\ST\\
&\quad -\iint \left(\partial_x(m\eta)-\uq \eta\right) N(\eta)\psi \dxdt.
\end{aligned}
\ee

On the other hand, it follows from \e{b70rho} that
\be\label{b70}
\begin{aligned}
\frac{g}{2}\iint (1-m_x)\eta^2\dxdt
&=\mez \iint (1-m_x)\psi G(\eta)\psi\dxdt\\
&\quad-\mez \iint (1-m_x)\eta P_{ext}\dxdt 
-\mez \int(1-m_x)\eta\psi\dx\ST\\
&\quad -\mez\iint (1-m_x)\eta \, N(\eta)\psi\dxdt.
\end{aligned}
\ee
By plugging \e{b70} in \e{b47aa}, we obtain the desired identity \e{b47bis}.
\end{proof}

\bigbreak

\noindent\textbf{Step 3: a Pohozaev identity.} To complete the proof of the theorem, 
it remains to study the last two terms in the right-hand side of \e{b47bis}. 
We begin with the last but one term
$$
\int (G(\eta)\psi)m\psi_x\dx.
$$
To handle this term, we split it into two terms in order to obtain an expression which 
make appear a positive term through a Pohozaev identity.  So we write
\be\label{b65}
\int (G(\eta)\psi)m\psi_x\dx=\int (G(\eta)\psi)x\psi_x\dx+\int (G(\eta)\psi)(m-x)\psi_x\dx.
\ee
We now use the following Pohozaev identity proved in \cite{Boundary}. 
\begin{lemm}[from \cite{Boundary}]
One has
\be\label{b67}
\int (G(\eta)\psi)x\psi_x\dx=\Sigma+ \int (\eta-x\eta_x)\big(N(\eta)\psi\big)\dx,
\ee
where $\Sigma=\Sigma(t)$ is a positive term given by
$$
\Sigma(t)=\frac{h}{2} \int_0^L \phi_x^2(t,x,-h)\dx+\frac{L}{2}\int_{-h}^{\eta(t,L)}\phi_y^2(t,L,y)\dy. 
$$
\end{lemm}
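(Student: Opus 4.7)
\emph{Proof proposal.} My plan is to lift this surface identity into the fluid domain $\Omega$ and exploit the harmonicity of $\phi$ via two successive applications of the divergence theorem with two different vector-field multipliers. The two boundary-geometry terms composing $\Sigma$ will arise from two distinct choices: the tank-end piece $\frac{L}{2}\int_{-h}^{\eta(L)}\phi_y^2(L,y)\dy$ from a horizontal multiplier, and the bottom piece $\frac{h}{2}\int_0^L\phi_x^2(x,-h)\dx$ from a vertical one.

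First I would apply the divergence theorem to $F_1=x\phi_x\nabla_{x,y}\phi$, whose divergence reduces to $\phi_x^2+\tfrac{x}{2}\partial_x|\nabla_{x,y}\phi|^2$ thanks to $\Delta_{x,y}\phi=0$. All boundary contributions vanish except on the free surface: the bottom because $\phi_y|_{y=-h}=0$, the left wall because of the factor $x$, the right wall because $\phi_x|_{x=L}=0$. The surface contribution is $\int_0^L x\phi_x|_{y=\eta} G(\eta)\psi\dx$, which I rewrite using $\phi_x|_{y=\eta}=\psi_x-\eta_x\phi_y|_{y=\eta}$ to isolate the target quantity $\int_0^L x\psi_x G(\eta)\psi\dx$. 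The bulk term $\int_\Omega\tfrac{x}{2}\partial_x|\nabla_{x,y}\phi|^2\dydx$ I would treat by Leibniz's rule, pulling $\partial_x$ outside the $y$-integral (picking up a boundary correction $\eta_x|\nabla_{x,y}\phi|^2|_{y=\eta}$), then integrating by parts in $x$ on $[0,L]$. Using $\phi_x(L,\cdot)=0$, the $x=L$ boundary term becomes $\frac{L}{2}\int_{-h}^{\eta(L)}\phi_y^2(L,y)\dy$, which is one half of $\Sigma$; a residual bulk term $\tfrac12\int_\Omega(\phi_x^2-\phi_y^2)\dydx$ remains.

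To convert this residual into the missing half of $\Sigma$, I would apply the divergence theorem a second time, now to $F_2=y\phi_y\nabla_{x,y}\phi-\tfrac{y}{2}|\nabla_{x,y}\phi|^2 e_2$, whose divergence simplifies to $\tfrac12(\phi_y^2-\phi_x^2)$ by harmonicity. The lateral walls contribute nothing since $n_2=0$ and $\phi_n|_{x=0,L}=0$; the bottom contributes precisely $-\tfrac{h}{2}\int_0^L\phi_x^2(x,-h)\dx$ (the factor $y=-h$ combining with the outward normal $n=-e_2$); and the free surface contributes $\int_0^L\eta[\phi_y G(\eta)\psi-\tfrac12|\nabla_{x,y}\phi|^2]|_{y=\eta}\dx$. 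At this point I would invoke the pointwise algebraic identity
\begin{equation*}
\bigl(\phi_y G(\eta)\psi-\tfrac12|\nabla_{x,y}\phi|^2\bigr)\big|_{y=\eta}=-N(\eta)\psi,
\end{equation*}
which follows by direct expansion from $G(\eta)\psi=(\phi_y-\eta_x\phi_x)|_{y=\eta}$ and the definition of $N(\eta)\psi$. The very same identity also collapses the residual free-surface contribution from the first step into the clean correction $-\int_0^L x\eta_x N(\eta)\psi\dx$, and gathering all pieces yields the stated identity with nonlinear term $\int_0^L(\eta-x\eta_x)N(\eta)\psi\dx$.

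The main obstacle I expect is sign bookkeeping: ensuring that the outward-normal conventions, the Leibniz correction from the moving top, and the two distinct occurrences of the algebraic identity $\phi_y G-\tfrac12|\nabla\phi|^2=-N$ combine so that the coefficient of $N(\eta)\psi$ comes out exactly as the symmetric combination $\eta-x\eta_x$. No regularity subtleties arise, since Proposition~\ref{P6B} guarantees $\nabla_{x,y}\phi\in C^1(\overline{\Omega})$, which is enough to justify every divergence theorem application and every boundary trace used above.
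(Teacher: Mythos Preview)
Your argument is correct. Both divergence-theorem computations check out: the horizontal multiplier $F_1=x\phi_x\nabla_{x,y}\phi$ produces the wall term $\tfrac{L}{2}\int\phi_y^2(L,y)\dy$ together with the residual $\tfrac12\iint(\phi_x^2-\phi_y^2)$ and the surface correction $-\int x\eta_x\,N(\eta)\psi\,\dx$; the vertical multiplier $F_2=y\phi_y\nabla_{x,y}\phi-\tfrac{y}{2}|\nabla_{x,y}\phi|^2 e_2$ converts that residual into $\tfrac{h}{2}\int\phi_x^2(x,-h)\dx+\int\eta\,N(\eta)\psi\,\dx$. The algebraic identity $\phi_y G(\eta)\psi-\tfrac12|\nabla_{x,y}\phi|^2\big|_{y=\eta}=-N(\eta)\psi$ is used exactly where you say, and the signs assemble correctly into $\Sigma+\int(\eta-x\eta_x)N(\eta)\psi\,\dx$.

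As for comparison with the paper: this lemma is not proved here but quoted from \cite{Boundary}, so there is no in-paper proof to match against. Your two-multiplier Rellich approach is precisely the spirit of that reference; note also that the paper's own appendix treats closely related quantities by the one-dimensional reduction \e{b41ab} (writing a surface integral as $\int u(x,\eta)\dx+\int f(x,\eta)\eta_x\dx$ and converting it to a bulk integral of $u_y-f_x$), which is just your divergence-theorem step expressed after integrating out $y$. In particular, identity \e{b51} with $\rho=1$ is exactly your Step~2 in that language. So your route is equivalent to, and arguably cleaner than, the scaffolding the paper uses elsewhere.
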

Observe that, by integrating in time, we obtain \e{b62} with $\mathcal{Q}=\int_0^TQ(t)\dt$. 
By so doing, we end up with
\be\label{b47c}
\begin{aligned}
\mez \int \Hr(t)\dt+\int \Sigma(t)\dt &=
-\iint P_{ext}\, \zeta\, \dxdt\\
&\quad +\iint \left(\frac{1-m_x}{2}\psi +(x-m)\psi_x\right)G(\eta)\psi\dxdt \\
&\quad-\int \zeta \psi \dx\ST\\
&\quad -\iint \rho N(\eta)\psi \dxdt.
\end{aligned}
\ee
where the coefficient $\rho$ in the last term is given by
$$
\rho=\zeta+\eta-x\eta_x=(m-x)\eta_x+\left(\frac{5}{4}+\frac{m_x}{2}\right)\eta.
$$

\bigbreak

\noindent\textbf{Step 4: computation of the remainder term.} 
In view of a possible application to the stabilization problem, 
the previous identity \e{b47c} is not sufficient since one cannot control {\em a priori} the last term 
$\int \rho N(\eta)\psi \dx$ by means of the energy. Indeed, 
$$
N(\eta)\psi=\mathcal{N} \big\arrowvert_{y=\eta}\quad\text{with}\quad
\mathcal{N}=
\mez\phi_x^2-\mez\phi_y^2+\eta_x \phi_x \phi_y,
$$
and clearly one cannot simply use the previous definition to bound $N(\eta)\psi$ 
by $K\iint \la\nabla_{x,y}\phi\ra^2\dydx$ using only the trace theorem. However, 
as in \cite{Boundary}, inspired by the analysis done by Benjamin and Olver (\cite{BO}) of the conservation laws for water waves, 
one can rewrite $\int \rho N(\eta)\psi \dx$ as the sum of two terms which can be controlled either by the energy or by 
the positive term $\Sigma$ given by the Pohozaev identity.

\begin{lemm}
There holds
\be\label{b110}
\int \rho N(\eta)\psi\dx
=-\iint \rho_x\phi_x\phi_y \dydx +\mez \int \rho \phi_x^2\arrowvert_{y=-h}\dx.
\ee
\end{lemm}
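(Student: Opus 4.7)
The plan is to recognize the identity as a divergence theorem applied to a carefully chosen vector field on $\Omega(t)$, exploiting the harmonic nature of $\phi$. Since $\phi_x-i\phi_y$ is holomorphic, the pair
$$
\mathbf{G}_0 \defn \bigl(\phi_x\phi_y,\ \tfrac{1}{2}(\phi_y^2-\phi_x^2)\bigr)
$$
is divergence-free: a direct check gives $\partial_x(\phi_x\phi_y)+\partial_y(\tfrac{1}{2}(\phi_y^2-\phi_x^2))=\phi_{xx}\phi_y+\phi_x\phi_{xy}-\phi_y\phi_{xx}-\phi_x\phi_{xy}=0$, where we used $\phi_{yy}=-\phi_{xx}$.

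Next, I would introduce the weighted field $\mathbf{G}=\rho(t,x)\,\mathbf{G}_0$. Since $\rho$ depends only on $(t,x)$, one obtains the clean identity $\nabla_{x,y}\cdot\mathbf{G}=\rho_x\,\phi_x\phi_y$. Integrating over $\Omega(t)$ and applying the divergence theorem turns the volume integral $\iint_\Omega \rho_x\phi_x\phi_y\,\dydx$ into the sum of four boundary fluxes, one for each side of $\partial\Omega(t)$.

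Now I would evaluate each boundary flux in turn. On the free surface $y=\eta(t,x)$, the unnormalized outward normal is $(-\eta_x,1)$, so the flux integrand equals
$$
\rho\bigl[-\eta_x\phi_x\phi_y+\tfrac{1}{2}(\phi_y^2-\phi_x^2)\bigr]\baeta = -\rho\,\mathcal{N}\baeta = -\rho\, N(\eta)\psi,
$$
by the very definition \e{b90bis} of $\mathcal{N}$. On the bottom $y=-h$, the outward normal is $(0,-1)$ and $\phi_y=0$ there (from \e{b7}), so the integrand reduces to $\tfrac{1}{2}\rho\,\phi_x^2\zz$—up to the appropriate sign—yielding the term $\tfrac{1}{2}\int\rho\,\phi_x^2\arrowvert_{y=-h}\dx$. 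On the vertical walls $x=0$ and $x=L$, only the first component of $\mathbf{G}$ contributes, namely $\rho\phi_x\phi_y$, which vanishes identically since $\phi_x=0$ at $x=0,L$ by \e{b6}.

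Collecting the four contributions gives
$$
\iint_\Omega \rho_x\phi_x\phi_y\,\dydx = -\int \rho\, N(\eta)\psi\,\dx+\tfrac{1}{2}\int\rho\,\phi_x^2\arrowvert_{y=-h}\dx,
$$
which rearranges immediately to \e{b110}. There is no real obstacle here; the only subtlety is guessing the right vector field, but once one notices the holomorphic structure of $\phi_x-i\phi_y$ it is essentially forced by the need to recover $\mathcal{N}\baeta$ from the boundary trace. Proposition~\ref{P6B} ensures that $\nabla_{x,y}\phi$ is $C^1(\overline{\Omega})$, so every integration by parts and boundary evaluation in the argument is justified.
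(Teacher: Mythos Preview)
Your argument is correct and is essentially the same as the paper's proof, just phrased in the language of the divergence theorem rather than via the elementary identity \e{b41ab}. Indeed, the paper sets $u=\tfrac{1}{2}\rho(\phi_x^2-\phi_y^2)$ and $f=\rho\phi_x\phi_y$, which amounts exactly to your choice $\mathbf{G}=(f,-u)$; the identity \e{b41ab} is nothing but the divergence theorem applied to this vector field on $\Omega(t)$, with the wall and bottom contributions handled by the same boundary conditions \e{b6}--\e{b7} that you invoke.
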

\begin{proof}
This result will be obtained by writing $\int \rho N(\eta)\psi\dx$ under the form 
$$
\iint u(t,x,\eta(t,x))\dxdt+\iint f(t,x,\eta(t,x))\eta_x(t,x)\dxdt,
$$
together with an application of 
the following elementary identity: for any functions $u=u(x,y)$ and $f=f(x,y)$ with $f\arrowvert_{x=0}=f\arrowvert_{x=L}=0$, one has
\be\label{b41ab}
\int u(x,\eta(x))\dx+\int f(x,\eta)\eta_x\dx
\\=\iint (u_y-f_x) \dy\dx
+\int u(x,-h)\dx.
\ee
Indeed, 
$$
\int_0^L u(x,\eta(x))\dx=\int_0^L \int_{-h}^{\eta(x)}u_y(x,y)\dy 
+\int_0^L u(x,-h)\dx,
$$
and
$$
\int_0^L f(x,\eta)\eta_x\dx+\int_0^L \int_{-h}^{\eta(x)}f_x\, \dy\dx=\int_{-h}^{\eta} f\, \dy\dx\Big\arrowvert_{x=0}^{x=L}=0.
$$

Now, by definition of $N(\eta)\psi$, we have
$$
\int \rho N(\eta)\psi\dx=
\int u(x,\eta)\dx +\int f(x,\eta)\eta_x\dx,
$$
with
$$
u(x,y)=\mez \rho\left( \phi_x^2-\phi_y^2\right),\qquad
f(x,y)=\rho \phi_x\phi_y.
$$
Since $f\arrowvert_{x=0}=f\arrowvert_{x=L}=0$ and
$$
u\arrowvert_{y=-h}=\mez \rho \phi_x^2\arrowvert_{y=-h} ,\qquad 
u_y-f_x=-\rho_x\phi_x\phi_y,
$$
the desired result \e{b110} follows from \e{b41ab}.
\end{proof}
By plugging this result into \e{b47c}, we complete the proof of Theorem~\ref{T1bis}.

\section{Proof of Proposition~\ref{P:quant}}\label{S:6}

We want to prove an inequality of the form 
\be\label{p230}
T \Hr(T)\le C\Hr(0),
\ee
where $C$ is as given by the right-hand side of \e{p400}. 
To do so, we will prove that
\be\label{p231}
\int_0^T\Hr(t)\dt\le C\Hr(0).
\ee
Then the desired bound \e{p230} will be deduced from \e{p231}Ê
and the fact that the energy is decreasing, so that 
$T\Hr(T)\le \int_0^T\Hr(t)\dt$.

\begin{lemm}
Assume that $\rho$ satisfies
$$
\rho(t,x)\ge -h,\quad \la \rho_x(t,x)\ra\le c<\mez.
$$
Then
\be\label{b49c}
\begin{aligned}
\left(\mez-c\right)\intT \Hr(t)\dt&\le 
-\intTL P_{ext}\, \zeta\, \dxdt-\intL \zeta \psi \dx\ST\\
&\quad +\intTL \left(\frac{1-m_x}{2}\psi +(x-m)\psi_x\right)G(\eta)\psi\dxdt.
\end{aligned}
\ee
\end{lemm}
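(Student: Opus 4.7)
The plan is to start from the exact integral identity \eqref{b49} of Theorem~\ref{T1bis} and then dispose of the two terms that are not present in the desired inequality, namely $\mathcal{Q}$ on the left-hand side and the cubic remainder $\int_0^T\iint_{\Omega(t)} \rho_x \phi_x \phi_y \dydxdt$ on the right-hand side. The two assumptions $\rho(t,x)\ge -h$ and $\la \rho_x\ra \le c<\mez$ are precisely what is needed to treat these terms.

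First I would use the lower bound on $\rho$. Since $\rho\ge -h$, the coefficient $h/2+\rho/2$ in the definition \eqref{b62} of $\mathcal{Q}$ is nonnegative, and the second piece $\frac{L}{2}\int_0^T\int_{-h}^{\eta(t,L)}\phi_y^2\dydt$ is manifestly nonnegative. Hence $\mathcal{Q}\ge 0$, and one is entitled to drop it from the left-hand side of \eqref{b49}, which turns \eqref{b49} into an inequality with the three desired terms plus the cubic remainder on the right.

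Next, to absorb the cubic remainder into the left-hand side, I would combine the bound $\la \rho_x\ra\le c$ with the pointwise Young inequality $\la \phi_x\phi_y\ra\le \tfrac{1}{2}(\phi_x^2+\phi_y^2)=\tfrac{1}{2}\la\nabla_{x,y}\phi\ra^2$. This yields
\begin{equation*}
\left\la \iint_{\Omega(t)} \rho_x \phi_x \phi_y \dydx\right\ra \le \frac{c}{2}\iint_{\Omega(t)} \la\nabla_{x,y}\phi\ra^2\dydx \le c\,\mathcal{H}(t),
\end{equation*}
where in the last step I used the definition \eqref{b2} of $\mathcal{H}$, which bounds the kinetic energy $\tfrac{1}{2}\iint \la\nabla_{x,y}\phi\ra^2 \dydx$ by $\mathcal{H}(t)$. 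Integrating in $t$ and moving the resulting term $c\int_0^T\mathcal{H}(t)\dt$ to the left-hand side converts the coefficient $\mez$ of $\int_0^T\mathcal{H}\dt$ into $\mez-c$, which gives exactly \eqref{b49c}.

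There is no real obstacle here: the identity \eqref{b49} already does all the work, and the two hypotheses on $\rho$ are tailor-made for the two disposal steps. The only point requiring mild care is to observe that the cubic remainder is \emph{exactly} of quadratic type in $\nabla_{x,y}\phi$ with bounded coefficient, so that $c<\mez$ suffices to preserve a strictly positive coefficient in front of $\int_0^T\mathcal{H}\dt$. This is precisely the reason behind the smallness assumption on $\rho_x$, and it explains the factor $(\mez-c)$ appearing in the statement.
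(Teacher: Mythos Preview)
Your proposal is correct and follows essentially the same approach as the paper: first use $\rho\ge -h$ to obtain $\mathcal{Q}\ge 0$, then use $\la\rho_x\ra\le c$ together with $\la\phi_x\phi_y\ra\le\tfrac12(\phi_x^2+\phi_y^2)$ to bound the cubic remainder by $c\int_0^T\mathcal{H}(t)\dt$ and absorb it into the left-hand side. The paper's proof is exactly this, stated in a slightly more compressed form.
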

\begin{proof}
The assumptions on 
$\rho$ imply that $\mathcal{Q}\ge 0$ as well as the estimate
$$
\int_0^T\iint_{\Omega(t)} \rho_x \phi_x \phi_y\dydxdt
\le \frac{c}{2}\int_0^T\iint_{\Omega(t)} \left( \phi_x^2+\phi_y^2\right)\dydxdt\le c\int_0^T\Hr(t)\dt.
$$
The wanted inequality then immediately follows from Theorem~\ref{T1bis}.
\end{proof}

\begin{nota*}
We use the notations 
$$
\lA f\rA_{L^2}=\left(\int_0^L f(x)^2\dx\right)^{1/2},\quad 
\lA f\rA_{L^\infty}=\sup_{x\in [0,L]}\la f(x)\ra.
$$
\end{nota*}
\begin{lemm}
For any $\alpha>0$,
\begin{align*}
&-\intTL P_{ext}\, \zeta\, \dxdt
+\intTL \left(\frac{1-m_x}{2}\psi+(x-m)\psi_x\right)G(\eta)\psi\dxdt\\
&\qquad\le \frac{C(m)^2}{2\alpha g} \intT \lA \px P_{ext}\rA_{L^2}^2dt +\alpha\int_0^T \Hr(t)\dt\\
&\qquad\quad+N_2(\psi)\left(T \Hr(0) \int_0^T\lA \px P_{ext}\rA_{L^2}^2\dt\right)^{1/2},
\end{align*}
where
\begin{align*}
C(m)&=\lA m\rA_{L^\infty}+\frac{L}{2}\lA (1-m_x)-1/2\rA_{L^\infty},\\
N_2(\psi)&=\sup_{t\in [0,T]}\frac{\lA  \Psi_2(t)\rA_{L^2}}{\sqrt{\Hr(0)}}\quad\text{with }
\Psi_2=\px \left(\frac{1-m_x}{2}\psi +(x-m)\psi_x\right).
\end{align*}
\end{lemm}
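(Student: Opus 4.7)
The plan is to bound the two integrals on the left-hand side separately, reducing each to an expression involving $\lA \px P_{ext}\rA_{L^2}$. Two ingredients drive the argument: the mean-zero constraint $\int_0^L P_{ext}\dx = 0$, which by the fundamental theorem of calculus applied at a zero of $P_{ext}$ gives $\lA P_{ext}\rA_{L^\infty}\le \sqrt{L}\lA \px P_{ext}\rA_{L^2}$; and the support alignment built into \eqref{p364}, namely that $m(x)=x$ on $[0,L-\delta/2]$ and $\chi\equiv 1$ on that same interval.

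For the pressure term, I would rewrite $\zeta = \px(m\eta) + \tfrac{1}{2}\bigl(\tfrac12 - m_x\bigr)\eta$ and integrate by parts in $x$ using $m(0)=m(L)=0$, which yields
\[
-\int_0^L P_{ext}\,\zeta\dx = \int_0^L (\px P_{ext})\,m\eta\dx - \tfrac{1}{2}\int_0^L P_{ext}\,\eta\bigl(\tfrac{1}{2}-m_x\bigr)\dx.
\]
The first integral is estimated directly by Cauchy--Schwarz: $\lA m\rA_{L^\infty}\lA \px P_{ext}\rA_{L^2}\lA \eta\rA_{L^2}$. For the second, combining $\lA P_{ext}\rA_{L^\infty}\le \sqrt{L}\lA \px P_{ext}\rA_{L^2}$ with $\lA \eta\rA_{L^1}\le \sqrt{L}\lA \eta\rA_{L^2}$ gives $\tfrac{L}{2}\lA \tfrac{1}{2}-m_x\rA_{L^\infty}\lA \px P_{ext}\rA_{L^2}\lA \eta\rA_{L^2}$. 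Adding yields exactly $C(m)\lA \px P_{ext}\rA_{L^2}\lA \eta\rA_{L^2}$. Young's inequality $ab\le \tfrac{a^2}{2\alpha g} + \tfrac{\alpha g}{2}b^2$, combined with $\tfrac{g}{2}\lA \eta(t)\rA_{L^2}^2\le \Hr(t)$ and time-integration, produces the first two terms of the claimed right-hand side.

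For the $G(\eta)\psi$-term, I would use $G(\eta)\psi = -\px \overline{V}$ (an immediate consequence of \eqref{p410}) and integrate by parts in $x$. The boundary contributions at $x=0,L$ vanish because the wall condition $\phi_x=0$ forces $\overline{V}(t,0)=\overline{V}(t,L)=0$. The remaining integrand is $\Psi_2\,\overline{V}$. Here the support structure is crucial: since $m(x)=x$ on $[0,L-\delta/2]$, the weight $\tfrac{1-m_x}{2}\psi + (x-m)\psi_x$ vanishes identically on that interval, so $\Psi_2$ is supported in $[L-\delta/2,L]$; but on that interval $\chi\equiv 1$, so $\overline{V}=\chi\overline{V}=\px P_{ext}$ on the support of $\Psi_2$. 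Hence
\[
\int_0^L \Psi_2\,\overline{V}\dx = \int_0^L \Psi_2\, \px P_{ext}\dx \le \lA \Psi_2(t)\rA_{L^2}\lA \px P_{ext}(t)\rA_{L^2}.
\]
Pulling out $\sup_{t\in[0,T]}\lA \Psi_2(t)\rA_{L^2} = N_2(\psi)\sqrt{\Hr(0)}$ and applying Cauchy--Schwarz in time to the remaining $\int_0^T \lA \px P_{ext}\rA_{L^2}\dt$ delivers the $N_2(\psi)\bigl(T\Hr(0)\int_0^T\lA \px P_{ext}\rA_{L^2}^2\dt\bigr)^{1/2}$ factor.

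The only genuinely nontrivial step is the support-matching argument between $m$ and $\chi$ that lets one convert $\overline{V}$ into $\px P_{ext}$ without losing any constant; the rest is integration by parts together with Cauchy--Schwarz and Young's inequality.
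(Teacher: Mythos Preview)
Your proof is correct and follows the paper's argument essentially line for line: the same decomposition of $\zeta$, the same integration by parts pushing a derivative onto $P_{ext}$, the same support-matching between $\Psi_2$ and $\chi$ to convert $\overline{V}$ into $\px P_{ext}$, and the same Cauchy--Schwarz/Young conclusion. The only cosmetic difference is that the paper bounds the $B$-term via the $L^2$ Poincar\'e inequality $\lA P_{ext}\rA_{L^2}\le L\lA \px P_{ext}\rA_{L^2}$ together with $L^2$--$L^2$ H\"older, whereas you use the $L^\infty$ bound $\lA P_{ext}\rA_{L^\infty}\le \sqrt{L}\lA \px P_{ext}\rA_{L^2}$ paired with $\lA\eta\rA_{L^1}\le\sqrt{L}\lA\eta\rA_{L^2}$; both routes produce the identical constant $C(m)$.
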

\begin{proof}
We split $\intL P_{ext}\, \zeta\, \dx$ as the sum $A+B$ where
$$
A=\intL P_{ext}\px (m\eta)\dx,\qquad
B=\intL P_{ext}\left(-\uq \eta+\frac{1-m_x}{2}\eta\right)\dx.
$$
Since $m(0)=m(L)=0$, one has
$A=-\intL (\px P_{ext})m\eta\dx$, and hence
$$
\la A\ra\le \lA \px P_{ext}\rA_{L^2}\lA m\rA_{L^\infty}\lA \eta\rA_{L^2}.
$$
On the other hand,
$$
\la B \ra\le \mez \lA (1-m_x)-1/2\rA_{L^\infty}\lA P_{ext}\rA_{L^2}\lA \eta\rA_{L^2}.
$$
Now, since $P_{ext}$ has mean value zero by assumption \e{b240}, it follows from the Poincar\'e inequality that
\be\label{Poincare}
\lA P_{ext}\rA_{L^2}\le L \lA \px P_{ext}\rA_{L^2}.
\ee
By combining the previous inequalities, we conclude that
$$
\la \intL P_{ext}\, \zeta\, \dx\ra\le C(m) \lA \px P_{ext}\rA_{L^2}\lA \eta\rA_{L^2},
$$
which immediately implies that, for any $\alpha>0$,
\begin{align*}
\la \intTL P_{ext}\, \zeta\, \dxdt\ra &\le \frac{1}{2\alpha g} C(m)^2 \int_0^T \lA \px P_{ext}\rA_{L^2}^2\dt 
+\frac{\alpha g}{2}\int_0^T\lA \eta\rA_{L^2}^2\dt\\
&\le \frac{1}{2\alpha g} C(m)^2 \int_0^T \lA \px P_{ext}\rA_{L^2}^2\dt 
+\alpha\int_0^T\Hr(t)\dt.
\end{align*}
It remains to estimate the terms which involve the Dirichlet to Neumann operator. 
In doing so, we use the following well-known formula (which follows from \e{p410})
$$
G(\eta)\psi=-\px \overline{V}.
$$
Since $\overline{V}$ vanishes for $x=0$ or $x=L$, by integration by parts, we get
$$
\intL \left(\frac{1-m_x}{2}\psi+(x-m)\psi_x\right)G(\eta)\psi\dx=\intL \Psi_2 \overline{V}\dx.
$$
Since $\px P_{ext}=\chi \overline{V}$ by definition and since $\chi(x)=1$ on the support of $\Psi_2$ 
(by assumption on $m$), we deduce that
$$
\intL \left(\frac{1-m_x}{2}\psi+(x-m)\psi_x\right)G(\eta)\psi\dx =
\intL \Psi_2 \px P_{ext}\dx. 
$$
As a consequence,
\begin{align*}
\la \intL \left(\frac{1-m_x}{2}\psi +(x-m)\psi_x\right)G(\eta)\psi\dx\ra 
&\le \lA \Psi_2\rA_{L^2}\lA \px P_{ext}\rA_{L^2}\\
&\le N_{2,T}(\psi)\sqrt{\Hr(0)}\lA \px P_{ext}\rA_{L^2},
\end{align*}
by definition of $N_{2,T}(\psi)$. 
Then, using $\int_0^T f(t)\dt\le \sqrt{T}(\int_0^T f(t)^2\dt)^{1/2}$, we deduce that
\begin{multline*}
\la \intTL \left(\frac{1-m_x}{2}\psi +(x-m)\psi_x\right)G(\eta)\psi\dxdt\ra \\
\le N_{2,T}(\psi)\left(T \Hr(0) \int_0^T\lA \px P_{ext}\rA_{L^2}^2\dt\right)^{1/2}.
\end{multline*}
This completes the proof.
\end{proof}

In view of the previous lemmas, it remains only to estimate the integrals
$$
\int_{0}^T \int_0^L (\px P_{ext}(t,x))^2\, \dxdt,\quad \int \zeta \psi \dx\ST.
$$
Firstly, recall from \e{p220} that
\be\label{p220bis}
\int_{0}^T \int_0^L (\px P_{ext}(t,x))^2\, \dxdt\le \Hr(0),
\ee
here we used the assumption $\chi\le 1$.
To estimate the second term, set
$$
B(t)\defn\intL \zeta(t,x) \psi(t,x) \dx.
$$
So we have to estimate $B(T)-B(0)$. In fact, we will estimate the two terms separately. 
We begin by integrating by parts to write $B(t)$ under the form
$$
B(t)=\intL \eta \Psi_1\dx \quad\text{where}\quad\Psi_1\defn -m\psi_x-\uq \psi +\frac{1-m_x}{2} \psi.
$$
As a result $B(t)\le \lA \eta\rA_{L^2}\lA \Psi_1\rA_{L^2}$ and hence
$$
\la \intL \zeta \psi \dx\ST\ra\le \lA \eta(0)\rA_{L^2}\lA \Psi_1(0)\rA_{L^2}+\lA \eta(T)\rA_{L^2}\lA \Psi_1(T)\rA_{L^2}.
$$
Remembering that 
$$
N_{1,T}(\psi)=\sup_{t\in [0,T]}\frac{\lA  \Psi_1(t)\rA_{L^2}}{\sqrt{\Hr(0)}},\quad 
\lA \eta(t)\rA_{L^2}\le \sqrt{\frac{2}{g}}\sqrt{\Hr(t)},
$$
and using again the fact that $\Hr$ is decreasing, we obtain the estimate
$$
\lA \eta(0)\rA_{L^2}\lA \Psi_1(0)\rA_{L^2}+\lA \eta(T)\rA_{L^2}\lA \Psi_1(T)\rA_{L^2}
\le \frac{2\sqrt{2}}{\sqrt{g}}N_{1,T}(\psi)\Hr(0).
$$
By combining the previous estimates, we end up with
$$
\left(\mez-c-\alpha\right)\int_0^T\Hr(t)\dt \le \left\{\frac{C(m)^2}{2\alpha g}+ \sqrt{T} N_{2,T}(\psi)+\frac{2\sqrt{2}}{\sqrt{g}}N_{1,T}(\psi)\right\}\Hr(0).
$$
As explained at the beginning of this section, this completes the proof.

\appendix

\section{Uniform estimates for the linearized problem}\label{S:linear}

In this appendix we consider Cauchy problem for 
the linearized water-wave equations. As already seen, one can reduce the analysis of the Cauchy problem to the case of periodic functions which are even in $x$. 
We thus assume in this section that $x$ belongs to the circle $S^1=\xR/(2\pi\xZ)$ and use Fourier analysis. Also, to simplify notations we assume that $g=1$ and that 
the fluid is infinitely deep (that is $h=+\infty$), 
so that $G(0)$ is the Fourier multiplier $\la D_x\ra$ defined by $\la D_x\ra (\sum \psi_n e^{inx})
=\sum \psi_n \la n\ra e^{inx}$. The equations read
\be\label{b300}
\left\{
\begin{aligned}
&\partial_t \eta=\la D_x\ra\psi,\\
&\partial_t \psi+\eta +P_{ext}=0.
\end{aligned}
\right.
\ee
Set 
\be\label{b301}
P_{ext}=-\px^{-1}\left(\chi(x)\partial_x^{-1} \la D_x\ra\psi\right),
\ee
where $\chi\ge 0$ is a smooth compactly supported function, even in $x$, and where, by definition, 
$$
\px^{-1}\sum_{n\in \xZ} \psi_n e^{i nx}=\sum_{n\in \xZ\setminus\{0\}}  \frac{\psi_n}{in}e^{i nx}.
$$
For the linearized problem, this definition of $P_{ext}$ is equivalent to \e{p201a} (recall that we assume 
that $P_{ext}$ has mean value zero).
\begin{prop}[Uniform estimates]\label{P:Uniform}
Let $s\in [0,+\infty)$ be such that $2s\in \xN$. 
For any initial data $(\eta_0,\psi_0)$ in the Sobolev space 
$H^s(S^1)\times H^{s+\mez}(S^1)$, the Cauchy problem for \e{b300}-\e{b301} has a unique solution 
$(\eta,\psi)\in C^0([0,+\infty);H^s(S^1)\times H^{s+\mez}(S^1))$. Moreover, 
there exists a constant $C_s$ depending only on $s$ such that, for any $t\ge 0$,
\be\label{b346}
\lA \eta(t)\rA_{H^s}+\lA \psi(t)\rA_{H^{s+\mez}}\le C_s \lA \eta(0)\rA_{H^s}+C_s\lA \psi(0)\rA_{H^{s+\mez}}.
\ee
\end{prop}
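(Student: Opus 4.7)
\smallskip
\noindent\emph{Plan of proof.} The overall strategy is to establish global existence by abstract semigroup theory, to obtain the case $s=0$ from the standard energy identity of \S\ref{S:HD}, and then to upgrade the regularity by induction on $s$ (in half-integer steps, since $2s\in\xN$). For existence: the map $\psi\mapsto P_{ext}$ defined by \eqref{b301} is a composition of Fourier multipliers with multiplication by the smooth function $\chi$; it maps $H^{\sigma}(S^1)$ continuously into $H^{\sigma+1}(S^1)$, so that the right-hand side of \eqref{b300} is a \emph{bounded} linear operator on $H^{s}(S^1)\times H^{s+1/2}(S^1)$. The Hille--Yosida theorem (or direct ODE analysis of the Fourier coefficients $(\hat\eta_n,\hat\psi_n)$) produces a unique global solution in $C^0([0,+\infty);H^s\times H^{s+1/2})$.

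For the base case $s=0$, the Hamiltonian $\mathcal{H}=\mez\int(\eta^2+\psi|D_x|\psi)\dx$ satisfies $\dot{\mathcal{H}}=-\int\chi\overline{V}^2\dx\le 0$ with $\overline{V}=-\partial_x^{-1}|D_x|\psi$, exactly as in \S\ref{S:HD} specialized to the linear case. This yields $\|\eta\|_{L^2}^2+\|\psi\|_{\dot H^{1/2}}^2\le 2\mathcal{H}(0)$; combined with the conservation of $\int\eta\,\dx$ (which we may assume to vanish, whence $\int\psi\,\dx$ is also conserved because $\int P_{ext}\,\dx=0$), this gives the $H^0\times H^{1/2}$ bound. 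Integrating in time I also record the key dissipation estimate $\int_0^{+\infty}\|\px P_{ext}(t)\|_{L^2}^2\,\dt\le \mathcal{H}(0)$, which will be crucial below.

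For the inductive step, I introduce the complex variable $u\defn|D_x|^{1/2}\psi+i\eta$, which diagonalises the unperturbed system and satisfies the Schr\"odinger-type equation
\[
\partial_t u-i|D_x|^{1/2}u=-|D_x|^{1/2}P_{ext}.
\]
Its $H^s(S^1)$ norm reproduces exactly the quantity to be estimated (up to the conserved mean of $\psi$). Differentiating $\lA\langle D_x\rangle^s u\rA_{L^2}^2$ in time and substituting \eqref{b301}, two integrations by parts (using that $\partial_x^{-1}$ is skew-adjoint on mean-zero functions) give
\[
\frac{d}{dt}\|u\|_{H^s}^2=-2\int\chi\,(\langle D_x\rangle^s\overline{V})^2\dx-2\int\langle D_x\rangle^s\overline{V}\cdot[\langle D_x\rangle^s,\chi]\,\overline{V}\,\dx.
\]
The first term is dissipative, and the commutator $[\langle D_x\rangle^s,\chi]$ has order $s-1$, so the second term is bounded by $C\|u\|_{H^s}\|u\|_{H^{s-1/2}}$.

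The main obstacle is precisely to close this induction without losing uniformity in time: a naive Gronwall argument based on the inductive hypothesis for $\|u\|_{H^{s-1/2}}$ would only yield a linear-in-$t$ growth of $\|u(t)\|_{H^s}$, which is not enough for \eqref{b346}. To eliminate this growth I combine the Duhamel formula for $u$ with the time-integrated dissipation bound from the base case, together with the interpolation inequality $\|P_{ext}\|_{H^{s+1/2}}\le C\|P_{ext}\|_{L^2}^{1/(2s+2)}\|P_{ext}\|_{H^{s+1}}^{(2s+1)/(2s+2)}$ and the smoothing bound $\|P_{ext}\|_{H^{s+1}}\le C\|\psi\|_{H^s}$. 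Since the $L^2$-in-time norm of $\|P_{ext}\|_{L^2}$ is uniformly controlled by $\mathcal{H}(0)$, these interpolated bounds show that the source term $|D_x|^{1/2}P_{ext}$ contributes only a uniformly bounded amount to $\|u(t)\|_{H^s}$ via Duhamel; feeding this back into the energy identity closes the induction and yields \eqref{b346} with a constant depending only on $s$ (and on the fixed data $\chi$).
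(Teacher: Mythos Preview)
Your base case $s=0$ is fine and matches the paper. The inductive step, however, does not close. Via Duhamel, the forcing contributes $\int_0^t\lA P_{ext}(\tau)\rA_{H^{s+1/2}}\,d\tau$ to $\lA u(t)\rA_{H^s}$, and this is an $L^1$-in-time quantity. Your interpolation gives $\lA P_{ext}\rA_{H^{s+1/2}}\le C\lA P_{ext}\rA_{L^2}^{\theta}\lA u\rA_{H^s}^{1-\theta}$ with $\theta=\tfrac{1}{2s+2}$; setting $M(t)=\sup_{[0,t]}\lA u\rA_{H^s}$ and applying H\"older in time, one obtains
\[
\int_0^t\lA P_{ext}\rA_{H^{s+1/2}}\,d\tau\le C\,M(t)^{1-\theta}\Big(\int_0^t\lA P_{ext}\rA_{L^2}^2\Big)^{\theta/2}\,t^{\,1-\theta/2}.
\]
The factor $t^{\,1-\theta/2}$ is unbounded, and no Young-type absorption of $M(t)^{1-\theta}$ removes it. In short, the dissipation bound is only $L^2$ in time, and interpolation against an $L^\infty_t$ bound cannot produce an $L^1_t$ bound; feeding this back into the energy identity runs into the same obstruction via the commutator term.

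The paper sidesteps this difficulty entirely by commuting the equation with $\partial_t$ rather than with spatial Fourier multipliers. Since the coefficients are time-independent, $\dot u=\partial_t u$ satisfies the \emph{identical} equation $\partial_t\dot u+L\dot u+P\dot u=0$, with no commutator at all; hence $\lA\partial_t u(t)\rA_{L^2}\le\lA\partial_t u(0)\rA_{L^2}$ uniformly. One then recovers spatial regularity from the equation itself: $\lA Lu(t)\rA_{L^2}\le\lA\partial_t u(t)\rA_{L^2}+\lA Pu(t)\rA_{L^2}$, and $P$ is bounded on $L^2$, so $\lA u(t)\rA_{H^{1/2}}\le C\lA u(0)\rA_{H^{1/2}}$. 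Iterating in $k$ with $\partial_t^k$ gives the $H^{k/2}$ bounds. This is the missing idea in your argument.
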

\begin{rema}\label{rema:Uniform}
The quantities $N_{1,T}(\psi)$ and $N_{2,T}(\psi)$, as introduced in the statement of Theorem~\ref{P:quant}, 
are bounded by
$$
K_1(m)\frac{\lA \psi(t)\rA_{H^{1}}}{\mathcal{H}(0)},\quad 
K_2(m)\frac{\lA \psi(t)\rA_{H^{2}}}{\mathcal{H}(0)}.
$$
The previous proposition implies that 
$$
N_{1,T}(\psi)\les \frac{\lA (\eta(0),\psi(0))\rA_{H^\mez \times H^1}}
{\lA (\eta(0),\psi(0))\rA_{L^2\times \dot H^\mez}},\quad 
N_{2,T}(\psi)\les \frac{\lA (\eta(0),\psi(0))\rA_{H^{\tdm}\times H^2}}{\lA (\eta(0),\psi(0))\rA_{L^2\times \dot H^\mez}}.
$$
As already mentioned, the ratios in the right-hand side measure the frequency localization of the initial data. 
This shows that, in this case, Theorem~\ref{P:quant} gives a quantitative bound in terms 
of the frequency localization of the initial data.
\end{rema}
\begin{proof}
The existence of a solution follows from classical arguments and we prove only the estimate \e{b346}. 
In doing so, it is convenient to symmetrize this system. Consider the Fourier multiplier 
$\la D_x\ra^{\mez}$ and set $\theta=\la D_x\ra^\mez\psi$, which means that, if 
$\psi=\sum_{n\in \xZ} \psi_n e^{i nx}$, then $\theta=\sum_{n\in\xZ}\sqrt{\la n\ra} \psi_n e^{inx}$. 
The equations can be written under the form
$$
\partial_t u+Lu+Pu=0,
$$
where 
$$
u=\begin{pmatrix} \eta \\ \theta\end{pmatrix},\quad 
L=\begin{pmatrix} 0 & -\la D_x\ra^{\mez} \\ \la D_x\ra^{\mez} & 0\end{pmatrix},\quad 
P=\begin{pmatrix} 0 & 0 \\ 0 & -\px^{-1}\la D_x\ra^{\mez}\big(\chi \px^{-1}\la D_x\ra^{\mez} \cdot\big)\end{pmatrix}.
$$
Denote by $(\cdot,\cdot)$ the scalar product in $L^2(S^1)\times L^2(S^1)$.
We obtain $L^2$ estimates for $u$ 
by a simple integration by parts.
Indeed, since $L=-L^{*}$, we obtain
\be\label{p420}
\frac{d}{dt}
\lA u\rA_{L^{2}}^{2}+(Pu,u)=0.
\ee
Now $(Pu,u)\ge 0$, and hence we have the estimate $\lA u(t)\rA_{L^2}\le \lA u(0)\rA_{L^2}$ for all $t\ge 0$. 

To estimate the Sobolev norms of $u(t)$, we cannot simply commute 
spatial derivatives to the equation. Indeed, since $P$ is an operator with variable coefficients, 
the commutator between $P$ and spatial derivatives 
does not vanish and then using the Duhamel formula we would obtain a bound which is not uniform in $t$. 
To overcome this difficulty, we commute the time derivative $\partial_t$ with the equation. 
Set $\dot u=\partial_t u$. Then 
$\dot u$ solves the same equation, so the previous $L^2$-bound applied with $u$ replaced by $\dot u$ gives 
the estimate
$$
\lA \partial_t u(t)\rA_{L^2}\le \lA \partial_t u(0)\rA_{L^2}.
$$
On the other hand, using the equation \e{p410} and the triangle inequality, we get
\begin{align*}
&\lA Lu(t)\rA_{L^2}\le \lA \partial_t u(t)\rA_{L^2}+\lA P u(t)\rA_{L^2},\\
&\lA \partial_t u(0)\rA_{L^2}\le \lA Lu(0)\rA_{L^2}+\lA Pu(0)\rA_{L^2}.
\end{align*}
By combining the previous estimates with the easy bounds 
\begin{align*}
&\lA u(t)\rA_{H^{1/2}}\le \lA Lu(t) \rA_{L^2}+\lA u(t)\rA_{L^2},\quad &\lA Lu(0)\rA_{L^2}\le \lA u(0)\rA_{H^{1/2}},\\
&\lA P u(t)\rA_{L^2} \le K\lA u(t)\rA_{L^2},\quad & \lA P u(0)\rA_{L^2} \le K\lA u(0)\rA_{L^2},
\end{align*}
we conclude that
$$
\lA u(t)\rA_{H^{1/2}}\le C \lA u(0)\rA_{H^{1/2}},
$$
for some constant $C$ independent of time. Iterating this argument, we obtain 
$\lA u(t)\rA_{H^{k/2}}\le C_k \lA u(0)\rA_{H^{k/2}}$ for any integer $k$.
\end{proof}

\section{Luke's variational principle}\label{Luke}

Our goal in this section is to relate the function $\Theta$ 
with Luke's variational principle. Consider the case $P_{ext}=0$. 
Following Luke, the gravity water-wave system can be derived by minimizing the following Lagrangian: 
$$
\mathcal{L}=\int_{t_0}^{t_1}\iint_{\Omega(t)} p\dydxdt=-
\int_{t_0}^{t_1}\iint_{\Omega(t)}\left( \partial_t \phi+\mez\la\nabla_{x,y}\phi\ra^2+gy\right)\dydxdt.
$$
Now observe that
\be\label{b18a}
\int_{-h}^{\eta}\partial_t \phi\dy=\partial_t \left(\int_{-h}^{\eta} \phi\dy
\right) 
-(\partial_t\eta) \psi,\quad 
\iint_{\Omega} gy\dydx = \int_0^L \frac{g}{2}\eta^2\dx -\frac{gLh^2}{2},
\ee
and recall that $\partial_t\eta=G(\eta)\psi$ and also the fact that the kinetic energy is given by 
$\mez \int \psi G(\eta)\psi \dx$. We thus find that
$$
\mathcal{L}=\int_{t_0}^{t_1}\left(K(t)-P(t)\right)\dt +C,
$$
where $C$ is a constant, depending only on $h,L,t_0,t_1$, which 
does not contribute to a variational principle). The previous identity relates $\mathcal{L}$ to the usual 
expression for the Lagrangian the difference between 
the averaged kinetic energy and the averaged potential energy. 

Now, instead of \e{b18a}, write
$$
\int_{-h}^{\eta}\partial_t \phi\dy=\partial_t \left(\int_{-h}^{\eta} \phi\dy
+\psi \eta\right) 
-\eta\partial_t \psi,
$$
to obtain that the Lagrangian $\mathcal{L}$ can be written under the form 
$$
\mathcal{L}=\mathcal{L}'+C-\int\eta\psi\dx \Big\arrowvert_{t=t_0}^{t=t_1}
$$
where $C$ is as above and
$$
\mathcal{L}'=\int_{t_0}^{t_1}\int \left( -\eta\partial_t\psi-\frac{g}{2}\eta^2-\mez \psi G(\eta)\psi\right)\dxdt.
$$
Now, by definition of $\Theta=-\eta\partial_t\psi-\frac{g}{2}\eta^2$, this gives
$$
\mathcal{L}'
=\int_{t_0}^{t_1}\int \left( \Theta-\mez \psi G(\eta)\psi\right)\dxdt .
$$

\section{Another integral identity}\label{A:Identity}

In this section we prove an integral identity analogous to the one obtained in Theorem~\ref{T1bis}. 
The main difference between these two results is that they involve two different observation terms.

\begin{theo}\label{T1}
Let $m\in C^\infty([0,L])$ with $m(0)=m(L)=0$. Then, for any regular solution of \e{systemG} defined on the time interval $[0,T]$, 
there holds
\be\label{b47}
\begin{aligned}
\mez \int_0^T \Hr(t)\dt+\mathcal{P}&=
-\int_0^T\int_0^L P_{ext}\left(\partial_x(m\eta)-\uq \eta\right)\, \dxdt\\
&\quad +\frac{g}{2}\int_0^T\int_0^L(1-m_x)\eta^2\dxdt\\
&\quad+\mez \int_0^T\iint_{\Omega(t)} (1-m_x)\left(\phi_x^2-\phi_y^2\right)\dydxdt\\
&\quad-\int_0^L \left(\px(m \eta)-\uq \eta\right) \psi\, \dx\ST\\
&\quad +\int_0^T\int_0^L \left(\tdm\eta_x-\mez \px (m_x\eta)\right) \phi_y\, \phi_x \dydxdt,
\end{aligned}
\ee
where
$$
\mathcal{P}= \int_0^T\int_0^L \left( \mez h+\frac {3-m_x}{4}\eta\right)\phi_x^2\arrowvert_{y=-h}\dxdt.
$$
\end{theo}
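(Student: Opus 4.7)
My plan is to follow the strategy of the proof of Theorem~\ref{T1bis}, diverging only at the use of the Pohozaev identity so that the wall contribution $\tfrac{L}{2}\int\phi_y^2|_{x=L}$ cancels out and only the bottom observation $\tfrac{h}{2}\int\phi_x^2|_{y=-h}$ survives in $\mathcal{P}$. The first step repeats the derivation of \e{b47aa} in the proof of Lemma~\ref{L4}: apply Lemma~\ref{P1} with the given weight $m$, substitute $\Theta = \tfrac{g}{2}\eta^2 + \eta P_{ext} + \eta N(\eta)\psi$ using the equation for $\psi$, and combine with the equipartition identity \e{b43}. The key difference from the proof of Theorem~\ref{T1bis} is that I would not apply Lemma~\ref{L3} a second time with $\chi = 1-m_x$; this preserves the potential-energy-weighted term $\tfrac{g}{2}(1-m_x)\eta^2$ in its original form, matching the corresponding term in \e{b47}. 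At the end of this step I would have the analog of \e{b47aa},
\begin{align*}
\tfrac{1}{2}\int_0^T \Hr(t)\dt &= \tfrac{g}{2}\iint(1-m_x)\eta^2\dxdt -\iint P_{ext}\zeta'\,\dxdt -\int\zeta'\psi\dx\ST \\
&\quad -\iint(G(\eta)\psi) m\psi_x\dxdt -\iint \zeta'\,N(\eta)\psi\dxdt,
\end{align*}
with $\zeta' = \partial_x(m\eta) - \tfrac{1}{4}\eta$.

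The crucial step is to rewrite $\iint(G(\eta)\psi)m\psi_x$ using the splitting $m = x + (m-x)$. The $x$-piece is handled by the Pohozaev identity \e{b67} of \cite{Boundary}, producing $\tfrac{h}{2}\int\phi_x^2|_{y=-h}$, $\tfrac{L}{2}\int\phi_y^2|_{x=L}$, and an $N(\eta)\psi$ remainder. For the $(m-x)$-piece, I would first establish the elementary algebraic identity $(G(\eta)\psi)\psi_x + (N(\eta)\psi)\eta_x = [\phi_x\phi_y + \tfrac{1}{2}\eta_x(\phi_y^2 - \phi_x^2)]|_{y=\eta}$ by expanding $\psi_x = (\phi_x + \eta_x\phi_y)|_{y=\eta}$ and simplifying. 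Then apply identity \e{b41ab} with $u = (m-x)\phi_x\phi_y$ and $f = -\tfrac{1}{2}(m-x)(\phi_x^2-\phi_y^2)$; harmonicity of $\phi$ gives $u_y - f_x = \tfrac{1}{2}(m_x - 1)(\phi_x^2 - \phi_y^2)$, and since $f|_{x=L} = -\tfrac{L}{2}\phi_y^2|_{x=L}\neq 0$, a generalized version of \e{b41ab} including a boundary correction at $x=L$ yields an extra contribution $-\tfrac{L}{2}\int\phi_y^2|_{x=L}$. This exactly cancels the wall term produced by Pohozaev, which is the structural reason the identity \e{b47} contains no $\phi_y^2|_{x=L}$ contribution.

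Substituting the combined expression back, the residual cubic term consolidates into $-\iint\rho\,N(\eta)\psi$ for an explicit $\rho$ linear in $\eta$ and $m_x\eta$. A final application of \e{b41ab}, exactly as in step~4 of the proof of Theorem~\ref{T1bis}, rewrites this as $\iint\rho_x\phi_x\phi_y -\tfrac{1}{2}\int\rho\,\phi_x^2|_{y=-h}$, producing the bulk coefficient $\tfrac{3}{2}\eta_x - \tfrac{1}{2}\partial_x(m_x\eta)$ and the $\eta$-dependent part $\tfrac{3-m_x}{4}\eta$ of the bottom term assembled in $\mathcal{P}$. The hardest step will be verifying the boundary cancellation at $x=L$ between the Pohozaev contribution and the generalized \e{b41ab} correction, since this is precisely what distinguishes the present identity from \e{b49}; once this cancellation is in place, the remainder is a careful bookkeeping computation tracking the coefficients through the two applications of \e{b41ab} used above.
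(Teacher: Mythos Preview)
Your route is genuinely different from the paper's, and the wall-term cancellation you identify is real. However, the paper never produces a wall term at all, and your final bookkeeping does not reproduce the coefficients stated in \eqref{b47}.

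\textbf{What the paper does.} The paper starts, as you do, from \eqref{b47aa}, rewritten in terms of $R_a,R_b,R_c$. But from that point on it never uses the Pohozaev identity \eqref{b67} nor the splitting $m=x+(m-x)$. Instead it (i) rewrites each of $R_a,R_b,R_c$ directly as a volume integral of $\phi_x^2-\phi_y^2$ and $\phi_x\phi_y$ via the rule \eqref{b41ab} (see the lemma giving \eqref{b48a}--\eqref{b48c}); and (ii) proves a separate Rellich-type identity,
\[
\iint \rho(\phi_x^2-\phi_y^2)\,\dydx - h\!\int \rho\,\phi_x^2\arrowvert_{y=-h}\,\dx
=\int \rho\eta\,\phi_x^2\arrowvert_{y=-h}\,\dx-2\!\iint \rho\eta_x\phi_x\phi_y+2\!\iint \rho_x(y-\eta)\phi_x\phi_y,
\]
obtained from the auxiliary function $u=-\rho(y-\eta)\phi_y^2$. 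This lemma converts the $\iint(\phi_x^2-\phi_y^2)$ contributions in $R_a,R_b,R_c$ into bottom terms and cross terms $\phi_x\phi_y$ \emph{without ever generating the wall integral} $\int\phi_y^2\arrowvert_{x=L}$. So the structural feature you single out---absence of the wall term---is achieved by a different mechanism: it is avoided rather than cancelled.

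\textbf{What your approach buys, and where it slips.} Your argument reuses \eqref{b67} and \eqref{b110} from the proof of Theorem~\ref{T1bis} and needs only the boundary-corrected version of \eqref{b41ab}; the cancellation of $\tfrac{L}{2}\int\phi_y^2\arrowvert_{x=L}$ between the Pohozaev contribution and the $x=L$ boundary term from the $(m-x)$ piece is correct (the latter equals $-\tfrac{L}{2}\int\phi_y^2\arrowvert_{x=L}$ because $m(L)-L=-L$ and $\phi_x\arrowvert_{x=L}=0$). The paper's approach is more self-contained and yields the new identity \eqref{b51} as a reusable tool. Your approach is conceptually closer to Theorem~\ref{T1bis}. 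The concrete gap is your last paragraph: tracing your steps, the consolidated cubic coefficient is $\rho=\zeta'+(\eta-m\eta_x)=(m_x+\tfrac34)\eta$, so \eqref{b110} produces a bulk cross term with coefficient $\tfrac34\eta_x+\partial_x(m_x\eta)$ and a bottom contribution $\tfrac12(m_x+\tfrac34)\eta$. These do not match the $\tfrac32\eta_x-\tfrac12\partial_x(m_x\eta)$ and $\tfrac{3-m_x}{4}\eta$ asserted in \eqref{b47}; your claim that the bookkeeping ``produces'' those precise coefficients is therefore unjustified as written.
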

\begin{proof}
We have already proved (see \e{b47aa}) that
\be\label{b167}
\begin{aligned}
\mez \int \Hr(t)\dt&=-\iint P_{ext}\left(\partial_x(m\eta)-\uq \eta\right)\, \dxdt\\
&\quad +\frac{g}{2}\iint(1-m_x)\eta^2\dxdt\\
&\quad-\int \left(\px(m \eta)-\uq\eta\right) \psi\, \dx\ST\\
&\quad -R_a-R_c+\mez R_b,
\end{aligned}
\ee
where $R_a$ is given by Proposition~$\ref{P1}$, $R_b$ is given by Lemma~$\ref{L3}$Ê
and $R_c$ is given by~\e{b168}. 
Consequently, it remains only to prove that
\be\label{b54}
R_a+R_b-\mez R_c=\mathcal{P}+\mathcal{N}+\mathcal{B}
\ee
where
\begin{align*}
\mathcal{P}&= \iint \left(\mez h+\frac {3-m_x}{4}\eta\right)\phi_x^2\arrowvert_{y=-h}\dxdt,\\[1ex]
\mathcal{N}&= -\iiint \left(\tdm\eta_x-\mez \px (m_x\eta)\right) \phi_y\, \phi_x \dydxdt,\\[1ex]
\mathcal{B}&= \iiint  \frac{m_x-1}{2}\left(\phi_x^2-\phi_y^2\right)\dydxdt.
\end{align*}

\begin{lemm}
Set
$$
V=(\px\phi)\aeta,\quad B=(\py\phi)\aeta.
$$
Then
\be\label{b31}
R_a=\mez \iint \left( (G(\eta)\psi)m V+ B m\psi_x\right)\, \dxdt.
\ee
\end{lemm}
\begin{proof}
Recall from \e{b31b} that
$$
R_a=\iint (G(\eta)\psi)m\psi_x\dxdt+ \iint (N(\eta)\psi)m\eta_x\dxdt.
$$
Now write 
\begin{align*}
&(G(\eta)\psi)m\psi_x+(N(\eta)\psi)m\eta_x\\
&\qquad\qquad=(G(\eta)\psi)\left(m\psi_x-mB\eta_x\right)+\left(\mez V^2+\mez B^2\right)m\eta_x\\
&\qquad\qquad=(G(\eta)\psi)m V+\left(\mez V^2+\mez B^2\right)m\eta_x\\
&\qquad\qquad=\mez (G(\eta)\psi)m V+\left[\left(\mez V^2+\mez B^2\right)m\eta_x+\mez (G(\eta)\psi)m V\right]\\
&\qquad\qquad=\mez (G(\eta)\psi)m V+\left[\left(\mez V^2+\mez B^2\right)m\eta_x+\mez (B-\eta_x V)m V\right]\\
&\qquad\qquad=\mez (G(\eta)\psi)m V+\left[\mez B^2m\eta_x+\mez B m V\right]\\
&\qquad\qquad=\mez (G(\eta)\psi)m V+\mez Bm\psi_x,
\end{align*}
which implies that $R_a$ can be written under the form \e{b31}.
\end{proof}

Next, we express $R_a,R_b$ and $R_c$ in terms of integrals of $\nabla_{x,y}\phi$.
\begin{lemm}
There holds
\begin{align}
R_a&= \iiint \frac{m_x}{2}\left((\px\phi)^2-(\py\phi)^2\right)\dydxdt, \label{b48a} \\ 
R_b&=\uq \iiint \left( (\px\phi)^2-(\py\phi)^2\right)\dydxdt-\frac h 4 \iint (\px\phi)^2\arrowvert_{y=-h}\dxdt ,\label{b48b} \\
R_c&= \mez \iiint \left[m_x\left( (\px\phi)^2-(\py\phi)^2\right)-2m_{xx}y (\px\phi)(\py\phi)\right]\dydxdt \notag\\
&\quad -\frac h 2 \iint m_x(\px\phi)^2\arrowvert_{y=-h}\dxdt\label{b48c}.
\end{align}
\end{lemm}
\begin{proof}
To obtain these identities, we will write $R_a,R_b$ and $R_c$ under the form 
$$
\iint u(t,x,\eta(t,x))\dxdt+\iint f(t,x,\eta(t,x))\eta_x(t,x)\dxdt
$$
and then apply the rule \e{b41ab} whose statement is recalled here: for any functions $u=u(x,y)$ and $f=f(x,y)$ with $f\arrowvert_{x=0}=f\arrowvert_{x=L}=0$, one has
\be\label{b41bis}
\int u(x,\eta(x))\dx+\int f(x,\eta)\eta_x\dx
\\=\iint(\py u-\px f)\, \dy\dx
+\int u(x,-h)\dx.
\ee

\smallbreak
\noindent{\em Computation of $R_a$.} Recall that 
$$
R_a=\mez \iint \left[  (G(\eta)\psi)m V+ B m\psi_x\right] \dxdt.
$$
By definition one has
$$
G(\eta)\psi=(\py \phi-\eta_x\px \phi)\aeta,\quad V=(\px\phi)\aeta,\quad 
B=(\py\phi)\aeta,
$$
so
$$
\mez \int \left[  (G(\eta)\psi)m V+ B m\psi_x\right] \dx=\int u(x,\eta)\dx +\int f(x,\eta)\eta_x\dx
$$
with
$$
u(x,y)=m(\px\phi)(\py\phi),\qquad
f(x,y)=\frac{m}{2}\left( (\px\phi)^2-(\py\phi)^2\right).
$$
Since $f\arrowvert_{x=0}=f\arrowvert_{x=L}=0$ and $u\arrowvert_{y=-h}=0$, it follows from \e{b41bis} that
$$
\mez \int \left[  (G(\eta)\psi)m V+ B m\psi_x\right] \dx=\iint(\py u-\px f)\, \dy\dx.
$$
Now, using that $\phi$ solves $\px^2\phi+\py^2\phi=0$, we easily find that 
$$
\py u-\px f=\frac{m_x}{2}\left((\px\phi)^2-(\py\phi)^2\right),
$$
so we verify the identity \e{b48a} for $R_a$.

\smallbreak
\noindent{\em Computation of $R_b$.} 
We have to compute
$$
\int u(x,\eta)\dx +\int f(x,\eta)\eta_x\dx
$$
with
$$
u(x,y)=\uq y\left[ (\px\phi)^2-(\py\phi)^2\right],\qquad
f(x,y)=\mez y (\px\phi)(\py\phi).
$$
Since $m(0)=m(L)=0$, one has $f\arrowvert_{x=0}=f\arrowvert_{x=L}=0$ and hence 
the wanted identity for $R_b$ follows from \e{b41bis}.

\smallbreak
\noindent{\em Computation of $R_c$.} It remains only to compute 
$$
R_c=\iint m_x\eta\left(\mez V^2-\mez B^2+BV \eta_x\right)\dxdt.
$$
As above we have
$$
\int m_x\eta\left(\mez V^2-\mez B^2+BV \eta_x\right)\dx=
\int u(x,\eta)\dx +\int f(x,\eta)\eta_x\dx
$$
with
$$
u(x,y)=\mez m_x y\left((\px\phi)^2-(\py\phi)^2\right),\qquad
f(x,y)=m_x y (\px\phi)(\py\phi).
$$
Since $\px\phi$ vanishes for $x=L$, we have $f\arrowvert_{x=0}=f\arrowvert_{x=L}=0$. On the other hand, one has
\begin{align*}
&u\arrowvert_{y=-h}=\mez m_x (\px\phi)^2\arrowvert_{y=-h},\\ 
&\py u-\px f=\mez m_x\left[ (\px\phi)^2-(\py\phi)^2\right]-m_{xx}y(\px\phi)(\py\phi),
\end{align*}
so \e{b48c} follows from \e{b41bis}.
\end{proof}

\begin{lemm}
There holds
\begin{align}
&\iint \rho(x)\left( \phi_x^2-\phi_y^2\right)\dydx -h\int \rho(x)\phi_x^2\arrowvert_{y=-h}\dx\label{b51}\\
&\qquad =\int \rho \eta \phi_x^2(x,-h)\, dx-2\iint \rho \eta_x \phi_y\, \phi_x \,dydx+2\iint \rho_x(y- \eta) \phi_y \phi_x\dydx.\notag
\end{align}
\end{lemm}
\begin{proof}
Set, for some fixed $t$,
$$
u(x,y)=-\rho(x)(y-\eta(t,x))(\py\phi)(t,x,y)^2.
$$
Then $u(x,\eta(t,x))=0$ and $u(x,-h)=0$ and hence $\int_{-h}^{\eta(t,x)} u_y\, dy=0$. 
On the other hand
$$
u_y=-2\rho(y-\eta)\phi_y \phi_{yy}-\rho(\phi_y)^2,
$$
so integrating on $y\in [-h,\eta(x)]$ and then on $x\in [0,L]$ we obtain, 
remembering that $\phi_{yy}=-\phi_{xx}$,
$$
0=\iint u_y=-\iint \rho \phi_y^2 +2\iint \rho(y-\eta)(\py\phi)(\px^2\phi).
$$
Now set $v\defn  \rho(y-\eta)(\py\phi)(\px\phi)$ and write 
\begin{align*}
&\iint \rho(y-\eta)(\py\phi)(\px^2\phi)\dydx=\iint \px v \dydx\\ 
&\qquad=+\iint \left\{-\rho_x(y-\eta)(\py\phi)(\px\phi)+\rho\eta_x (\py\phi)(\px\phi)-\rho(y-\eta)(\py\px\phi)(\px\phi)\right\}\dydx.
\end{align*}
Observe that $\iint \px v\dydx=0$ since $\int v\arrowvert_{x=0,L}\, dx=0$ and since $v\arrowvert_{y=\eta}=0$. We deduce that
$$
0=-\iint \rho \phi_y^2 -2\iint \rho (y-\eta)(\py\px\phi)(\px\phi)+2\iint \rho \eta_x \phi_y \phi_x-2\iint \rho_x(y- \eta) \phi_y \phi_x,
$$
so
\begin{align*}
0&=-\iint \rho\phi_y^2 -\iint \py\big( \rho (y-\eta)\phi_x^2\big)+\iint \rho \phi_x^2 \\
&\quad +2\iint \rho\eta_x \phi_y \, \phi_x-2\iint \rho_x(y- \eta) \phi_y \phi_x,
\end{align*}
and hence
$$
0=\iint \rho(\phi_x^2-\phi_y^2)-\int (h+\eta)\rho \phi_x^2(x,-h)\, dx+2\iint \rho \eta_x \phi_y\, \phi_x-2\iint \rho_x(y- \eta) \phi_y \phi_x,
$$
which concludes the proof.
\end{proof}
We are now in position to obtain \e{b54} which will conclude the proof of the theorem. Firstly, we write
$$
R_a= \iiint \mez \left(\phi_x^2-\phi_y^2\right)\dydxdt+\iiint \frac{m_x-1}{2}\left(\phi_x^2-\phi_y^2\right)\dydxdt,
$$
to obtain that, using \e{b51} with $\rho=1$,
\be\label{b57}
\begin{aligned}
R_a&=\mez\iint (h+\eta)\phi_x^2\arrowvert_{y=-h}\dxdt\\
&\quad -\iiint  \eta_x \phi_y\, \phi_x \dydxdt+\iiint \frac{m_x-1}{2}\left(\phi_x^2-\phi_y^2\right)\dydxdt.
\end{aligned}
\ee
Directly from \e{b51} applied with either $\rho=1$ or $\rho=m_x$, we find that
\be\label{b58}
R_b=\uq\iint \eta \phi_x^2(x,-h)\dxdt-\mez\iiint  \eta_x \phi_y\, \phi_x \dydxdt,
\ee
and
\begin{align*}
R_c&= - \iiint m_{xx}\, y \, \phi_x \, \phi_y \dydxdt \\
&\quad +\mez \iint m_x \,\eta\, \phi_x^2(x,-h)\dxdt-\iiint m_x \,\eta_x \,\phi_y\, \phi_x \dydxdt\\
&\quad+\iiint m_{xx}\,(y- \eta) \phi_y \,\phi_x\dydxdt.
\end{align*}
which simplifies to 
\be\label{b59}
R_c= +\mez \iint m_x \,\eta\, \phi_x^2(x,-h)\dxdt-\iiint \px(m_x \,\eta) \,\phi_y\, \phi_x \dydxdt.
\ee
We have proved \e{b54} which concludes the proof of Theorem~\ref{T1}.
\end{proof}

\vspace{3cm}

\noindent Thomas Alazard\\[1ex]
CNRS et D\'epartement de Math\'ematiques et Applications UMR 8553\\
\'Ecole normale sup\'erieure \\
45 rue d'Ulm\\
Paris F-75005, France

\end{document}